\documentclass{article}
\usepackage[english]{babel}

\usepackage[letterpaper,top=2cm,bottom=2cm,left=2cm,right=2cm,marginparwidth=1.75cm]{geometry}

\usepackage{calrsfs}
\usepackage{amssymb}
\usepackage{amsmath}
\usepackage{amsthm}
\usepackage{subcaption}
\usepackage{multirow}
\usepackage{graphicx}
\usepackage{amsfonts}
\usepackage{mathtools}
\usepackage{mathrsfs}
\usepackage{placeins}

\newtheorem{remark}{Remark}
\newtheorem{definition}{Definition}
\newtheorem{assumption}{Assumption}
\newtheorem{theorem}{Theorem}
\newtheorem{proposition}{Proposition}

\usepackage[mathscr]{euscript}
\usepackage[dvipsnames]{xcolor}
\usepackage[colorlinks=true, allcolors=OliveGreen]{hyperref}
\usepackage{filecontents}
\usepackage{tikz}
\usepackage{pgfplots}
\usetikzlibrary{external}
\usepackage{authblk}
\setlength{\marginparwidth}{2cm}
\usepackage{todonotes}
\usepackage{array}
\newcolumntype{C}{>{\centering\arraybackslash}p{2.5cm}}

\newcommand{\averagel}{\{\!\!\{}
\newcommand{\averager}{\}\!\!\}}

\newcommand{\jumpl}{[\![}
\newcommand{\jumpr}{]\!]}

\newcommand{\partition}{\mathcal{T}_h}
\newcommand{\facesinternal}{\mathcal{F}^\mathrm{I}_h}
\newcommand{\faces}{\mathcal{F}_h}
\newcommand{\facesN}{\mathcal{F}_h^N}

\newcommand{\facesD}{\mathcal{F}_h^D}

\newcommand{\facesboundary}{\mathcal{F}^\mathrm{B}_h}

\newcommand{\Wh}{W_h^\mathrm{DG}}
\DeclareMathAlphabet{\mathcalligra}{T1}{calligra}{m}{n}

\title{Discontinuous Galerkin Methods for Fisher-Kolmogorov Equation with Application to $\alpha$-Synuclein Spreading in Parkinson's Disease \footnote{\textbf{Funding}: PFA has been partially funded by PRIN2017 research grant n. 201744KLJL funded by MUR. PFA, LD and AMQ have been partially funded by PRIN2020 research grant n. 20204LN5N5 funded by MUR. PFA has been partially supported by ICSC—Centro Nazionale di Ricerca in High Performance Computing, Big Data, and Quantum Computing funded by European Union—NextGenerationEU. The present research has been supported by MUR, grant Dipartimento di Eccellenza 2023-2027. MC, PFA, FB, LD and AMQ are members of INdAM-GNCS.}}

\author[1]{Mattia Corti}

\affil[1]{MOX-Dipartimento di Matematica, Politecnico di Milano, Piazza Leonardo da Vinci 32, Milan, 20133, Italy}
            
\author[1]{Francesca Bonizzoni}
\author[1]{Luca Dede'}
\author[1,2]{Alfio M. Quarteroni}
\author[1]{Paola F. Antonietti}

\affil[2]{Institute of Mathematics, \'{E}cole Polytechnique F\'{e}d\'{e}rale de Lausanne, Station 8, Av. Piccard, Lausanne, CH-1015, Switzerland (Professor Emeritus)}

\begin{document}
\maketitle

\begin{abstract}
This spreading of prion proteins is at the basis of brain neurodegeneration. This paper deals with the numerical modelling of the misfolding process of $\alpha$-synuclein in Parkinson’s disease. We introduce and analyze a discontinuous Galerkin method for the semi-discrete approximation of the Fisher-Kolmogorov (FK) equation that can be employed to model the process. We employ a discontinuous Galerkin method on polygonal and polyhedral grids (PolyDG) for space discretization, to accurately simulate the wavefronts typically observed in the prionic spreading and we prove stability and a priori error estimates. Next, we use a Crank-Nicolson scheme to advance in time. For the numerical verification of our numerical model, we first consider a manufactured solution, and then we consider a case with wavefront propagation in two-dimensional polygonal grids. Next, we carry out a simulation of $\alpha$-synuclein spreading in a two-dimensional brain slice in the sagittal plane with a polygonal agglomerated grid that takes full advantage of the flexibility of PolyDG approximation. Finally, we present a simulation in a three-dimensional geometry reconstructed from magnetic resonance images of a patient's brain.
\end{abstract}

\section{Introduction}
Neurodegeneration represents a major challenge because of the ageing trends in the worldwide population. Evidence suggests that the misfolding and aggregation of prionic proteins into toxic and insoluble conformations stand at the basis of neurodegeneration \cite{walker_neurodegenerative_2015}. A most common protein undergoing the misfolding process is the $\alpha$-synuclein protein \cite{alafuzoff_chapter_2018}. In the literature, this protein is known to be related to many different pathologies, known as $\alpha$-synuclopathies, such as Parkinson's disease \cite{stefanis-alpha-syn}, Parkinson's disease with dementia and dementia with Lewy bodies \cite{breitve_longitudinal_2018}.
\par
To better highlight the differences between these pathologies (often co-existing), in recent years several mathematical models for the prion dynamics have been proposed. A mathematical description of the spreading of prionic proteins is of primary importance, particularly for $\alpha$-synuclein, for which positron emission tomography imaging (PET) cannot be used in clinical practice due to the absence of chemical ligands \cite{korat_alpha-synuclein_2021}. In \cite{schiesser:parkinson}, the author studied a coupling of ordinary differential equations (ODE) models for the microscopical dynamic inside the neuron and partial differential equations (PDE) models for the macroscopic spreading. Due to the oligomer coagulation and fragmentation phenomena, many works are based on the Smoluchowky coagulation equation \cite{bertsch_alzheimers_2017, franchi-lorenzani}. However, their overarching numerical inherited complexity calls for suitable simplifications eventually leading to simpler diffusion-reaction problems to be solved on the whole brain geometry; an example is provided by the Fisher-Kolmogorov (FK) model (also known as Fisher-KPP model) \cite{weickenmeierPhysicsbasedModelExplains2019, fornari_prion-like_2019}. The latter \cite{fisher-1937, kolmogorov-1937} is a nonlinear diffusion-reaction equation applied in many different contexts, in particular biological species' evolution.
\par
Over the years, many different numerical methods, such as Finite Element Methods (FEM) \cite{weickenmeierPhysicsbasedModelExplains2019, roessler_numerical_1997, engwer_estimating_2021}, Finite Differences Methods (FDM) \cite{macias-diaz_explicit_2012}, and Boundary Elements Methods \cite{gortsas_local_2022} were proposed to compute the approximate solution of the FK equation. A structure-preserving Discontinuous Galerkin (DG) formulation was proposed in \cite{bonizzoni_structure-preserving_2020}, where a change of variable is developed to preserve the positivity of the numerical solution.
\par
To face the geometric complexity and the need for high-order accuracy, in this work, we propose and analyze a Discontinuous Galerkin formulation on polygonal/polyhedral grids (PolyDG) for the semi-discrete approximation of the FK equation coupled with an implicit second-order in time discretization. The typical solution of the equation is a propagating wavefront: to capture it, a mesh with sufficient refinement is needed. The simplicity in supporting high-order approximations as well as the flexibility in handling complex geometries, and locally varying discretization parameters justify the choice of a PolyDG approach \cite{antonietti_highorder_2021}.
\par
Another strength of the proposed formulation is its flexibility in mesh generation; due to its applicability to polygonal/polyhedral meshes. The geometrical complexity of the brain is a challenge in mesh construction. The possibility of refining the mesh only in some regions, handling the hanging nodes and eventually using arbitrarily shaped elements is easy to implement in our approach. A powerful tool is also offered by mesh agglomeration \cite{manuzzi:CNN, corti:MPET}. In this setting, starting from an initial very detailed mesh, it is possible to generate a coarse one composed of generic polygons, which preserves the original detail of the boundary representation without the need for curved elements and with a reduction of the computational cost. Our formulation allows the accurate approximation of the wavefront velocity generated by the FK equation on this type of mesh velocity, which is a very desirable property in this context.
\par
\bigskip
The paper is organized as follows. In Section \ref{sec:model}, we introduce the FK mathematical model and discuss its application to neurodegeneration. In Section \ref{sec:polydg}, we introduce the PolyDG space discretization of the problem. In Section \ref{sec:stability}, we prove the stability of the semi-discretized problem. Section \ref{sec:error} is devoted to the proof of a priori error estimates of the semi-discretized problem. In Section \ref{sec:temporaldisc}, we introduce the time-discretization by means of the Crank-Nicolson method. In Section \ref{sec:numericalresults}, we validate the theoretical results by presenting some convergence tests to our manufactured solutions. Moreover, we assess the accuracy of the proposed scheme in capturing travelling wave on a two-dimensional setting. Finally, we analyse the application of $\alpha$-synuclein spreading in Parkinson's disease both in two-dimensional (with agglomerated polygonal meshes) and three-dimensional real geometries reconstructed from medical images. Finally, in Section  \ref{sec:conclusion}, we draw some conclusions and discuss further developments.

\section{The mathematical model}
\label{sec:model}
In this section, we consider the FK equation to describe the reaction and diffusion of misfolded proteins. For a final time $T>0$, the problem is dependent on time $t\in(0,T]$ and space $\boldsymbol{x}\in\Omega\subset\mathbb{R}^d$ ($d=2,3$). The solution to our problem represents the relative concentration of the misfolded protein $c=c(\boldsymbol{x},t)$. Indeed, under the assumption of constant baseline concentration of healthy state protein, the variable $c$ is rescaled in the interval $[0,1]$, where $0$ means the absence of misfolded proteins and $1$ is the high prevalence of them. A detailed derivation of this model can be found in \cite{weickenmeierPhysicsbasedModelExplains2019}.
\par
The problem in its strong formulation reads as follows:
\begin{equation}
 \begin{dcases}
     \dfrac{\partial c}{\partial t} =\nabla \cdot(\mathbf{D} \nabla\, c) + \alpha\,c(1-c) + f,
    & \mathrm{in}\,\Omega\times(0,T],
    \\[8pt]
    (\mathbf{D}\nabla c) \cdot \boldsymbol{n} = \phi_\mathrm{N}, & 
    \mathrm{on}\;\Gamma_N\times(0,T],
    \\[8pt]
    c = c_\mathrm{D}, & \mathrm{on}\;\Gamma_D\times(0,T],
    \\[8pt]
    c(\boldsymbol{x},0)=c_0(\boldsymbol{x}), & \mathrm{in}\;\Omega.
    \\[8pt]
\end{dcases}
\label{eq:fk_strong}
\end{equation}

In Equation \eqref{eq:fk_strong}, the reaction parameter $\alpha=\alpha(\boldsymbol{x})$ represents the local conversion rate of the proteins from healthy to misfolded state. Moreover, the diffusion tensor $\mathbf{D}=\mathbf{D}(\boldsymbol{x})$ denotes the spreading of misfolded protein inside the domain (the whole brain parenchymal tissue in our case), and the function $f=f(\boldsymbol{x},t)$ is a forcing term, which models the external addition/removal of mass (e.g. modelling some clearance mechanisms). Concerning the boundary conditions, we impose a sufficiently regular flux $\phi_\mathrm{N}$ on the boundary $\Gamma_N$ of the domain, while $c_\mathrm{D}$ fixes a value of concentration on a part of the boundary $\Gamma_D$. We underline that $\Gamma_D \cup \Gamma_N = \partial \Omega$ and $\Gamma_D \cap \Gamma_N = \emptyset$. 
\par
In the prions' spreading applications, the diffusion tensor is typically modelled as the superimposition of an extracellular diffusion effect with magnitude $d_\mathrm{ext}$ and an axonal diffusion with magnitude $d_\mathrm{axn}$ \cite{weickenmeierPhysicsbasedModelExplains2019}; for this reason, we assume that $\mathbf{D}$ has the following structure:
\begin{equation}
\label{eq:difftensor}
    \mathbf{D} = d_\mathrm{ext}\mathbf{I} + d_\mathrm{axn}(\boldsymbol{n}\otimes \boldsymbol{n}),
\end{equation}
where $\boldsymbol{n}=\boldsymbol{n}(\boldsymbol{x})$ is the axonal fibres direction at the point $\boldsymbol{x}\in\Omega$ and $d_\mathrm{ext}, d_\mathrm{axn} \geq 0$. The axonal direction is the principal orientation of the connections between the neurons (axons), which can be derived from Diffusion Tensor Imaging (DTI). The derivation of these directions is of primary importance for our purposes, because most of the spreading of the prions occurs through the axons \cite{weickenmeierPhysicsbasedModelExplains2019}. 
\par
We assume that the subset $\Gamma_D\subset\partial\Omega$ introduced above has positive measure $|\Gamma_D|>0$, then we define the Sobolev spaces $ W_0:=H^1_{\Gamma_D}(\Omega) = \{w\in H^1(\Omega):\; w|_{\Gamma_D}=0\}$ and $ W_\mathrm{D}:= \{w\in H^1(\Omega):\; w|_{\Gamma_D}=c_\mathrm{D}\}$. When $|\Gamma_D|=0$ and $\Gamma_N=\partial \Omega$, we define $W_0=W_\mathrm{D}=H^1(\Omega)$. Moreover, we employ a standard definition of scalar product in $L^2(\Omega)$, denoted by $(\cdot,\cdot)_\Omega$. The induced norm is denoted by $||\cdot||_\Omega$. For vector-valued and tensor-valued functions the definition extends componentwise \cite{salsa:EDP}. Given $k\in\mathbb{N}$ and an Hilbert space $H$ we use the notation $C^k([0,T],H)$ to denote the space of functions $c = c(\boldsymbol{x},t)$ such that $c$ is $k$-times continuously differentiable with respect to time and for each $t\in[0,T]$, $c(\cdot,t)\in H$, see e.g. \cite{salsa:EDP}. Adopting standard notation for Sobolev spaces, we make the following assumption on the coefficients' regularity.
\begin{assumption}[Coefficients' regularity]
We assume the following regularities for the coefficients and the forcing terms appearing in \eqref{eq:fk_strong}: 
\begin{itemize}
    \item $\alpha\in L^\infty(\Omega)$. 
    \item $\boldsymbol{\mathrm{D}}\in L^\infty(\Omega,\mathbb{R}^{d\times d})$ and $\exists d_0>0\;\forall\boldsymbol{\xi}\in \mathbb{R}^d:\; d_0|\boldsymbol{\xi}|^2 \leq \boldsymbol{\xi}^\top\mathbf{D}\boldsymbol{\xi} \quad \forall \boldsymbol{\xi}\in\mathbb{R}^d$.
    \item $f\in L^2((0,T],L^2(\Omega))$.
    \item $\phi_\mathrm{N}\in L^2((0,T];L^2(\Gamma_N))$.
    \item $c_\mathrm{D} \in L^2((0,T]; H^{1/2}(\Gamma_D))$.
    \item $c_0 \in L^2(\Omega)$.
\end{itemize} 
\end{assumption}
It can be proved that, under Assumption 1 and if $f=0$, $\phi_\mathrm{N}=0$, and $\Gamma_D=\emptyset$, when we consider $c_0(\boldsymbol{x})\in[0,1]$ for each $\boldsymbol{x}\in\Omega$, the FK equation admits a travelling wave solution. Moreover, it can be also proved that under these assumptions: $c(\boldsymbol{x},t)\in[0,1]$ for each $\boldsymbol{x}\in\Omega$ and $t>0$. In this specific setting, the equations admit two steady-state solutions: an unstable equilibrium at $c = 0$ and a stable one at $c = 1$. This implies:
\begin{equation*}
    \lim_{t\rightarrow +\infty} c(\boldsymbol{x},t)=1 \qquad \text{if}\;\exists \boldsymbol{x}\in\Omega \; \mathrm{such\;that}\; c_0(\boldsymbol{x})>0.
\end{equation*}

\par
By setting:
    \begin{equation}
        a(c,w) =  \left(\sqrt{\mathbf{D}} \nabla c,\sqrt{\mathbf{D}}\nabla w\right)_\Omega \qquad \forall c, w\in W
    \end{equation}
    \begin{equation}
        r_L(c,w) = \left(\alpha c, w\right)_\Omega \quad \forall c,w\in W,
    \end{equation}
    \begin{equation}
        r_N(v,c,w) = \left(\alpha(v c), w\right)_\Omega \quad \forall c,w,v\in W,
    \end{equation}                          
    \begin{equation}
        F(w) = (f,w)_\Omega + \left(\phi_\mathrm{N},w\right)_{\Gamma_N} \qquad \forall w\in W, %
    \end{equation}
the weak formulation of problem \eqref{eq:fk_strong} reads:
\par
\bigskip
For each $t\in(0,T]$ find $c(\boldsymbol{x},t)\in W_\mathrm{D}$ such that:
\begin{equation}
\begin{dcases}
     \left(\dfrac{\partial c(\boldsymbol{x},t)}{\partial t},w\right)_\Omega + a(c(\boldsymbol{x},t),w) - r_L(c(\boldsymbol{x},t),w) + r_N(c(\boldsymbol{x},t),c(\boldsymbol{x},t),w) = F(w) 
     & \forall w\in W_0, \\[8pt]
    c(\boldsymbol{x},0)=c_0, & \mathrm{in}\;\Omega.
\end{dcases}
\label{eq:weakform}
\end{equation}

\section{PolyDG semi-discrete formulation}
\label{sec:polydg}
In this section, after defining some preliminary concepts, we approximate in space the FK equation by the PolyDG method. For the sake of simplicity, we neglect the dependencies of the inequality constants on the model parameters, using the notation $x\lesssim y$ to mean that $\exists C>0: x\leq C y$, where $C$ depends on the model parameters (it may depend on $p$, but it is independent of the discretization parameter $h$).

\subsection{Discrete setting and preliminary estimates}
Let us introduce a polytopic mesh partition $\partition$ of the domain $\Omega$ made of disjoint polygonal/polyhedral elements $K$, where for each element $K\in \partition$, we denote by $|K|$ the measure of the element and by $h_K$ its diameter. We set $h=\max_{K\in\partition} h_K<1$. We define the interface as the intersection of the $(d-1)-$dimensional facets of two neighbouring elements. We distinguish two cases:
\begin{itemize}
    \item case $d=2$, in which the interfaces are always line segments; then we denote such a set of segments with $\faces$.
    \item case $d=3$, in which any interface consists of a generic polygon, we further assume that we can decompose each interface into (planar) triangles; we denote the set of all these triangles with $\faces$;
\end{itemize}
It is now useful to decompose $\faces$ into the union of interior faces ($\facesinternal$) and exterior faces ($\facesboundary$ ) lying on the boundary of the domain $\partial\Omega$, i.e. $\faces = \facesinternal \cup \facesboundary$. Moreover, the boundary faces set can be split according to the type of imposed boundary condition: $\facesboundary = \facesD \cup \facesN$, where $\facesD$ and $\facesN$ are the boundary faces contained in $\Gamma_D$ and $\Gamma_N$, respectively.  We assume that $\partition$ is aligned with $\Gamma_D$ and $\Gamma_N$, i.e. any $F\in\facesboundary$ is contained in either $\Gamma_D$ or $\Gamma_N$.
\par
\begin{assumption}
The mesh sequence $\{\partition\}_h$ satisfies the following properties \cite{dipietro:HHO}:
\begin{enumerate}
    \item Shape Regularity: $\forall K\in\partition\;it\;holds: c_1 h_K^d\lesssim q|K|\lesssim  c_2h_K^d$.
    \item Contact Regularity: $\forall F\in\faces$ with $F\subseteq \overline{K}$ for some $K\in\partition$, it holds $h_K^{d-1}\lesssim |F|$, where $|F|$ is the Hausdorff measure of the face $F$.
    \item Submesh Condition: There exists a shape-regular, conforming, matching simplicial submesh $\widetilde{\partition}$ such that:
    \begin{itemize}
        \item $\forall \widetilde{K}\in\widetilde{\partition}\;\exists K\in\partition:\quad \widetilde{K}\subseteq K$.
        \item The family $\{\widetilde{\partition}\}_h$ is shape and contact regular.
        \item $\forall \widetilde{K}\in\widetilde{\partition}, K\in\partition$ with $\widetilde{K} \subseteq K$, it holds $h_K \lesssim h_{\widetilde{K}}$.
    \end{itemize}
\end{enumerate}
\end{assumption}
Let us define $\mathbb{P}_{p}(K)$ as the space of polynomials of total degree $p\geq 1$ over a mesh element $K$. Then we can introduce the following discontinuous finite element space:
\begin{equation*}
    \Wh = \{w\in L^2(\Omega):\quad w|_K\in\mathbb{P}_{p}(K)\quad\forall K\in\partition\}
\end{equation*}
\par
\par
We next introduce the so-called trace operators \cite{arnoldUnifiedAnalysisDiscontinuous2001}. Let $F\in\facesinternal$ be a face shared by the elements $K^\pm$. Let $\boldsymbol{n}^\pm$ by the unit normal vector on face $F$ pointing exterior to $K^\pm$, respectively. Then, for sufficiently regular scalar-valued functions $v$ and vector-valued functions $\boldsymbol{q}$ respectively, we define:
\begin{itemize}
    \item the average operator $\averagel{\cdot}\averager$ on $F\in \facesinternal$: $\averagel{v}\averager = \dfrac{1}{2} (v^+ + v^-), \quad \averagel{\boldsymbol{q}}\averager = \dfrac{1}{2} (\boldsymbol{q}^+ + \boldsymbol{q}^-)$;
    \item the jump operator $\jumpl{\cdot}\jumpr$ on $F\in \facesinternal$: $\jumpl{v}\jumpr = v^+\boldsymbol{n}^+ + v^-\boldsymbol{n}^-, \quad \jumpl{\boldsymbol{q}}\jumpr = \boldsymbol{q}^+\cdot\boldsymbol{n}^+ + \boldsymbol{q}^-\cdot\boldsymbol{n}$.
\end{itemize}
In these relations we are using the superscripts $\pm$ on the functions, to denote the traces of the functions on $F$ taken within the interior to $K^\pm$. We remark that the jump of a scalar is a vector and the jump of a vector is a scalar. In the same way, we can define analogous operators on the face $F\in\facesD$ associated with the cell $K\in\partition$ with $\boldsymbol{n}$ outward unit normal on $\partial\Omega$:
\begin{itemize}
    \item the average operator $\averagel{\cdot}\averager$ on $F\in\facesD$: $\averagel{v}\averager = v, \quad \averagel{\boldsymbol{q}}\averager = \boldsymbol{q}$;
    \item the standard jump operator $\jumpl{\cdot}\jumpr$ on $F\in\facesD$, with Dirichlet conditions $g$, $\boldsymbol{g}$: $\jumpl{v}\jumpr = (v-g)\boldsymbol{n}, \quad \jumpl{\boldsymbol{q}}\jumpr = (\boldsymbol{q}-\boldsymbol{g})\cdot\boldsymbol{n}$.
\end{itemize}
We recall the following identity that will be useful in the method derivation:
\begin{equation}
    \jumpl v \boldsymbol{q} \jumpr = \jumpl \boldsymbol{q} \jumpr \averagel v \averager + \averagel \boldsymbol{q} \averager \cdot \jumpl v \jumpr \qquad \forall F \in \facesinternal.
\end{equation}
\par
Let us introduce the following broken Sobolev spaces for an integer $r\geq1$: $H^r(\mathcal{T}_h) = \{w_h\in L^2(\Omega): w_h|_K\in H^r(K)\quad \forall K\in\mathcal{T}_h\}$. Moreover, we introduce the shorthand notation for the $L^2$-norm $||\cdot||:=||\cdot||_{L^2(\Omega)}$ and for the $L^2$-norm on a set of faces $\mathcal{F}$ as $||\cdot||_\mathcal{F}=\left(\sum_{F\in\mathcal{F}}||\cdot||_{L^2(F)}^2\right)^{1/2}$. We define the following DG-norm:
\begin{equation}
    ||c||_{\mathrm{DG}} = \Big|\Big|\sqrt{\mathbf{D}}\nabla_h c \Big|\Big| + ||\sqrt{\eta}\jumpl c\jumpr||_{\facesinternal\cup\facesD} \qquad \forall c\in H^1(\partition)
\end{equation}
Furthermore, we recall the discrete Gagliardo-Nirenberg inequality \cite{nirenbergdiscrete}:
\begin{equation}
    \forall u_h\in \Wh \quad \exists C_{\mathrm{G}_d} = C_{\mathrm{G}_d}(p) >0:\qquad||u_h||_{L^q(\Omega)}\leq C_{\mathrm{G}_d} ||u_h||_{\mathrm{DG}}^s\;||u_h||_{L^2(\Omega)}^{1-s},
    \label{eq:gagliardo}
\end{equation}
with $s \in [0,1]$ and $q$ such that:
\begin{equation}
    \dfrac{1}{q} = s\left(\dfrac{1}{2} - \dfrac{1}{d}\right) + \dfrac{1-s}{2}.
\end{equation}
\begin{remark}
    We remark that most of the analysis is valid also for milder assumptions on the mesh than the ones in Assumption 2, which could be only polytopic regular \cite{Cangiani:PolyDG}. However, Gagliardo's inequality would not be valid anymore, hence we cannot completely extend the analysis.
\end{remark}
In this equation, the constant $C_{\mathrm{G}_d}$ is independent of the discretization parameter $h$. Finally, we recall the Perov inequality \cite{Perov:inequality}, that we use as an extension of the Gr\"{o}nwall inequality.
\begin{proposition}[Perov Inequality]
    Let $a,b,c$ be three positive constants and let $u\in L^\infty_+(0,\hat{t}):=\{ u\in L^\infty(0,\hat{t}): \; u(t)\geq0$ a.e. in $(0,\hat{t})\}$ such that:
    \begin{equation}
        u(t) \leq a + b\int_0^t u(s) \mathrm{d}s + c \int_0^t u^\gamma(s) \mathrm{d}s, \qquad \mathrm{for\;almost\;any}\; t\in(0,\hat{t}),
    \end{equation}
where $\hat{t}$ is such that:
\begin{equation}
    e^{b(\gamma-1)\hat{t}} < 1 + \dfrac{b}{a^{\gamma-1}c}.
    \label{eq:timebound}
\end{equation}
Then for almost any $t\in(0,\hat{t})$ we have:
\begin{equation}
    u(t) \leq \dfrac{a\;e^{bt}}{\left(1-a^{\gamma-1}\,c\,b^{-1}\left(e^{b(\gamma-1)t}-1\right)\right)^\frac{1}{\gamma-1}}
\end{equation}
\end{proposition}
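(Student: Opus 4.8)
The plan is to reduce the nonlinear integral inequality to an explicitly solvable Bernoulli-type differential inequality and then invert it. First I would introduce the auxiliary function $v(t) = a + b\int_0^t u(s)\,\mathrm{d}s + c\int_0^t u^\gamma(s)\,\mathrm{d}s$, which by hypothesis majorizes $u$ pointwise, $u(t)\leq v(t)$ for a.e.\ $t$. Since $u\in L^\infty_+$, this $v$ is absolutely continuous on $(0,\hat{t})$ with $v(0)=a>0$, is nondecreasing, and satisfies $v\geq a>0$. Differentiating gives $v'(t)=b\,u(t)+c\,u^\gamma(t)$ a.e.; using $0\leq u\leq v$ together with $b,c>0$ and the monotonicity of $x\mapsto x^\gamma$ on $[0,\infty)$ (here $\gamma>1$), I obtain the differential inequality $v'(t)\leq b\,v(t)+c\,v(t)^\gamma$ for almost every $t$.

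Next I would linearize via the Bernoulli substitution $\zeta(t)=v(t)^{1-\gamma}$. Because $v\geq a>0$, the function $\zeta$ is absolutely continuous and positive, with $\zeta'=(1-\gamma)v^{-\gamma}v'$. Since $1-\gamma<0$, multiplying the differential inequality by the \emph{negative} factor $(1-\gamma)v^{-\gamma}$ reverses its direction and yields the linear inequality $\zeta'(t)\geq (1-\gamma)b\,\zeta(t)+(1-\gamma)c$. Applying the integrating factor $e^{(\gamma-1)bt}$ and integrating from $0$ to $t$ with $\zeta(0)=a^{1-\gamma}$ gives the lower bound $\zeta(t)\geq e^{-(\gamma-1)bt}\big(a^{1-\gamma}-c\,b^{-1}(e^{(\gamma-1)bt}-1)\big)$.

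Finally I would invert the substitution. Rewriting the right-hand side above as $a^{1-\gamma}e^{-(\gamma-1)bt}\big(1-a^{\gamma-1}c\,b^{-1}(e^{(\gamma-1)bt}-1)\big)$, I observe that the assumption \eqref{eq:timebound} is exactly equivalent, via the monotonicity of $t\mapsto e^{b(\gamma-1)t}$, to the positivity of the parenthesized factor for every $t\in(0,\hat{t})$, so this lower bound on $\zeta$ is strictly positive there. Since $\zeta=v^{1-\gamma}$, i.e.\ $v=\zeta^{-1/(\gamma-1)}$, and the map $x\mapsto x^{-1/(\gamma-1)}$ is decreasing on $(0,\infty)$, applying it to the positive lower bound on $\zeta$ produces an upper bound on $v$, hence on $u\leq v$; the powers of $a^{1-\gamma}e^{-(\gamma-1)bt}$ collapse precisely to the numerator $a\,e^{bt}$ over the denominator $\big(1-a^{\gamma-1}c\,b^{-1}(e^{b(\gamma-1)t}-1)\big)^{1/(\gamma-1)}$, which is the claimed estimate.

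The main obstacle is the sign bookkeeping: multiplying by $(1-\gamma)v^{-\gamma}<0$ flips the inequality once, and the final inversion by the decreasing map $x\mapsto x^{-1/(\gamma-1)}$ flips it back, so both reversals must be tracked carefully for the direction to come out right. The delicate analytic point is that the inversion is legitimate only where the base $\zeta$ remains positive; verifying that the time restriction \eqref{eq:timebound} is exactly what guarantees this positivity on the whole interval $(0,\hat{t})$—equivalently, that the denominator does not vanish and the solution does not blow up before $\hat{t}$—is the crux of the argument.
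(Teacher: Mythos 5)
Your proof is correct; note that the paper itself offers no proof of this proposition at all --- it is recalled from the cited reference on Perov's inequality --- so there is no internal argument to compare against. What you give is the standard Bihari/LaSalle-type argument specialized to the Bernoulli nonlinearity $bx + c x^{\gamma}$, and all the delicate points are handled properly: the majorant $v(t) = a + b\int_0^t u(s)\,\mathrm{d}s + c\int_0^t u^{\gamma}(s)\,\mathrm{d}s$ is Lipschitz (hence absolutely continuous) precisely because $u \in L^{\infty}_{+}(0,\hat{t})$; the monotonicity of $x \mapsto x^{\gamma}$ on $[0,\infty)$ is what lets you pass from $u \le v$ to $v' \le bv + cv^{\gamma}$; the two inequality reversals (multiplication by $(1-\gamma)v^{-\gamma} < 0$, then inversion by the decreasing map $x \mapsto x^{-1/(\gamma-1)}$) cancel correctly; and condition \eqref{eq:timebound}, combined with the monotonicity of $t \mapsto e^{b(\gamma-1)t}$, is exactly the positivity of the lower bound for $\zeta = v^{1-\gamma}$ on all of $(0,\hat{t})$, which is what legitimizes that inversion. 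The final algebra also checks out, since $\bigl(a^{1-\gamma}e^{-(\gamma-1)bt}\bigr)^{-1/(\gamma-1)} = a\,e^{bt}$. One cosmetic remark: the statement as printed never quantifies $\gamma$; your argument (like the conclusion itself, with its $1/(\gamma-1)$ powers) requires $\gamma > 1$, which is consistent with how the paper invokes the result in Theorem 1 (there $\gamma = d \in \{2,3\}$), but it is worth making that hypothesis explicit at the start of the proof.
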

\begin{remark}
    If $c=0$ we recover the classical Gr\"{o}nwall inequality estimate: $u(t) \leq a\,e^{bt},\; \forall t\leq T$.
\end{remark}
\subsection{PolyDG semi-discrete formulation}
To construct the semi-discrete formulation, we define the following penalization function $\eta:\faces\rightarrow\mathbb{R}_+$:
\begin{equation}
    \eta = \eta_0
    \begin{cases}
         \dfrac{p^2}{\{h\}_\mathrm{H}},  & \mathrm{on}\; F\in\facesinternal\\
         \dfrac{p^2}{h},                 & \mathrm{on}\; F\in\facesD
    \end{cases}.
    \label{eq:penalty}
\end{equation}
In Equation \eqref{eq:penalty}, we are considering the harmonic average operator $\{\cdot\}_\mathrm{H}$ on $F\in\facesinternal$ and $\eta_0$ is a parameter at our disposal (to be chosen large enough to have stability). Moreover, we define the bilinear form $\mathcal{A}:\Wh\times \Wh\rightarrow \mathbb{R}$ as:
\begin{equation}
    \mathcal{A}(c,w) = \int_{\Omega}\nabla_h c\cdot\nabla_h w +\sum_{F\in\facesinternal\cup\facesD}\int_{F}\left(\eta \jumpl c\jumpr \cdot \jumpl w\jumpr -\averagel\mathbf{D} \nabla c\averager \cdot \jumpl w \jumpr -  \jumpl c\jumpr \cdot \averagel\mathbf{D} \nabla w\averager\right)\mathrm{d}\sigma\qquad \forall c,w\in\Wh,
\end{equation}
where $\nabla_h \cdot$ is the elementwise gradient \cite{quarteroni:EDP}. The semi-discrete PolyDG formulation reads.
\par
\bigskip
Find $c_h(t)\in \Wh$ such that $\forall t>0$:
\begin{equation}
\begin{dcases}
     \left(\dfrac{\partial c_h(t)}{\partial t},w_h\right)_\Omega + \mathcal{A}(c_h(t),w_h) - r_L(c_h(t),w_h) + r_N(c_h(t),c_h(t),w_h) = F(w_h)
     & \forall w_h\in \Wh, \\[8pt]
    c_h(0)=c_{h}^0 & \mathrm{in}\;\Omega_h,
\end{dcases}
\label{eq:DGFormulation}
\end{equation}
where $c_{h}^0\in\Wh$ is a suitable approximation of $c_0$. Its derivation follows the classical steps of the DG formulation for the Laplace equation (see \cite{quarteroni:EDP}). For more details on the definition of numerical fluxes associated with the symmetric interior penalty DG method considered in this paper, we refer to \cite{arnoldUnifiedAnalysisDiscontinuous2001}.

\section{Stability analysis of the semi-discrete formulation}
\label{sec:stability}
For the analysis, we exploit continuity and coercivity of the bilinear form $\mathcal{A}(\cdot,\cdot)$. The proof of these properties can be found in \cite{houston:book}. Concerning the well-posedness of the formulation we refer to analysis of semilinear parabolic formulations in FEM \cite{FKErrors} and DG \cite{suli:parabolic} settings.
\par
For simplicity, in both stability and convergence analyses, we assume homogeneous Dirichlet ($c_\mathrm{D}=0$) and Neumann ($\phi_\mathrm{N}=0$) boundary conditions.
\begin{proposition}
Let Assumption 2 be satisfied, then the bilinear form $\mathcal{A}(\cdot,\cdot)$ is continuous and coercive:
\begin{equation}
    \exists M > 0: \quad |\mathcal{A}(v_h,w_h)| \leq M ||v_h||_\mathrm{DG} ||w_h||_\mathrm{DG} \qquad \forall v_h,w_h \in \Wh,
    \label{eq:continuity}
\end{equation}
\begin{equation}
    \exists \mu > 0: \quad \mathcal{A}(v_h,v_h) \geq \mu ||v_h||_\mathrm{DG}^2 \qquad \forall v_h\in \Wh,
    \label{eq:coercivity}
\end{equation}
where $M$ and $\mu$ are independent of $h$. Coercivity holds provided that the penalty parameter $\eta$ is large enough.
\end{proposition}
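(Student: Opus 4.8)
The plan is to prove continuity and coercivity of $\mathcal{A}(\cdot,\cdot)$ separately, treating the bilinear form as a sum of three contributions: the volumetric term $\int_\Omega \nabla_h c \cdot \nabla_h w$, the penalization term $\sum_F \int_F \eta \jumpl c \jumpr \cdot \jumpl w \jumpr$, and the two symmetric consistency terms involving $\averagel \mathbf{D} \nabla c \averager \cdot \jumpl w \jumpr$. The strategy relies on bounding each piece by the $\mathrm{DG}$-norm, and the key technical ingredients are the Cauchy-Schwarz inequality, a trace-inverse inequality on polytopic elements (which, under Assumption 2, controls $\|\averagel \sqrt{\mathbf{D}}\nabla v_h \averager\|_F$ in terms of the volumetric gradient scaled by $p^2/h$), and Young's inequality to absorb cross terms.

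For continuity, I would first apply Cauchy-Schwarz to the volumetric and penalty terms, which directly yields bounds of the form $\|\sqrt{\mathbf{D}}\nabla_h v_h\|\,\|\sqrt{\mathbf{D}}\nabla_h w_h\|$ and $\|\sqrt{\eta}\jumpl v_h\jumpr\|_{\facesinternal\cup\facesD}\,\|\sqrt{\eta}\jumpl w_h\jumpr\|_{\facesinternal\cup\facesD}$, each of which is controlled by $\|v_h\|_\mathrm{DG}\|w_h\|_\mathrm{DG}$. The consistency terms are the delicate ones: here I would insert the penalty weight by writing $\averagel \mathbf{D}\nabla v_h\averager\cdot\jumpl w_h\jumpr = (\eta^{-1/2}\averagel \mathbf{D}\nabla v_h\averager)\cdot(\eta^{1/2}\jumpl w_h\jumpr)$, apply Cauchy-Schwarz on the faces, and then invoke the discrete trace inequality to bound $\|\eta^{-1/2}\averagel \mathbf{D}\nabla v_h\averager\|_{\facesinternal\cup\facesD}$ by $C\|\sqrt{\mathbf{D}}\nabla_h v_h\|$, with $C$ independent of $h$ thanks to the scaling $\eta \sim p^2/h$ matching the trace constant $\sim p^2/h$. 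Summing the three estimates gives the continuity constant $M$.

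For coercivity, evaluating $\mathcal{A}(v_h,v_h)$ gives $\|\sqrt{\mathbf{D}}\nabla_h v_h\|^2 + \|\sqrt{\eta}\jumpl v_h\jumpr\|^2_{\facesinternal\cup\facesD} - 2\sum_F\int_F \averagel\mathbf{D}\nabla v_h\averager\cdot\jumpl v_h\jumpr$. The first two terms already reproduce $\|v_h\|_\mathrm{DG}^2$, so the work is in absorbing the cross term. I would bound it using Young's inequality with a parameter $\epsilon$, writing $2|\sum_F\int_F\averagel\mathbf{D}\nabla v_h\averager\cdot\jumpl v_h\jumpr| \leq \epsilon\|\eta^{-1/2}\averagel\mathbf{D}\nabla v_h\averager\|^2_{\facesinternal\cup\facesD} + \epsilon^{-1}\|\sqrt\eta\jumpl v_h\jumpr\|^2_{\facesinternal\cup\facesD}$, then apply the trace inequality to the first term to bound it by $\epsilon C \|\sqrt{\mathbf{D}}\nabla_h v_h\|^2$. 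Choosing $\epsilon$ small absorbs part of the gradient term, and then choosing $\eta_0$ large enough (equivalently, the penalty coefficient large) ensures the remaining penalty contribution stays positive, yielding $\mathcal{A}(v_h,v_h)\geq \mu\|v_h\|_\mathrm{DG}^2$.

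I expect the main obstacle to be the trace-inverse estimate on polytopic elements and its interaction with the harmonic-average penalty weight $\{h\}_\mathrm{H}$ on interior faces. Specifically, one must verify that the face-localized bound on $\averagel \mathbf{D}\nabla v_h\averager$ scales compatibly with $\eta$ so that the trace constant $C$ is genuinely $h$-independent; this is exactly where Assumption 2 (shape regularity, contact regularity, and the submesh condition) enters, since on general polygons the standard trace inequality must be routed through the simplicial submesh $\widetilde{\partition}$. Since the excerpt states these properties are established in \cite{houston:book}, I would cite that trace inequality rather than reprove it, and focus the argument on the correct bookkeeping of the penalty weights and the explicit threshold on $\eta_0$ that guarantees positivity of the residual penalty term.
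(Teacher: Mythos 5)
The paper itself never proves this proposition: the authors state it and refer to \cite{houston:book}, so the relevant comparison is with the standard polytopic-DG argument given there, which is exactly the route you outline (Cauchy--Schwarz on the volume and penalty terms, $\eta^{\pm 1/2}$-weighted Cauchy--Schwarz on the consistency terms combined with a trace-inverse inequality routed through the simplicial submesh of Assumption 2, and a sufficiently large penalty for coercivity). Your continuity argument is correct, and you rightly single out the one genuinely delicate point: the $p^2/h_K$ scaling of the polytopic trace-inverse estimate must cancel against the harmonic-average weight $\{h\}_\mathrm{H}$ in the penalty \eqref{eq:penalty}. (A cosmetic remark: the paper's displayed volume term reads $\int_\Omega \nabla_h c\cdot\nabla_h w$ without $\mathbf{D}$, an evident typo; your $\mathbf{D}$-weighted reading is the intended one and is what makes the bound by $\|\cdot\|_\mathrm{DG}$ go through.)

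The coercivity step, however, contains a concrete parameter slip that breaks the argument as literally written. You apply Young's inequality with the $\eta$ weights already inserted,
\begin{equation*}
2\Bigl|\sum_{F\in\facesinternal\cup\facesD}\int_F \averagel\mathbf{D}\nabla v_h\averager\cdot\jumpl v_h\jumpr\Bigr| \;\le\; \epsilon\,\bigl\|\eta^{-1/2}\averagel\mathbf{D}\nabla v_h\averager\bigr\|^2_{\facesinternal\cup\facesD} \;+\; \epsilon^{-1}\bigl\|\sqrt{\eta}\,\jumpl v_h\jumpr\bigr\|^2_{\facesinternal\cup\facesD},
\end{equation*}
and after the trace inequality (which turns the first term into $\epsilon\,(C_{\mathrm{tr}}/\eta_0)\,\|\sqrt{\mathbf{D}}\nabla_h v_h\|^2$) you arrive at
\begin{equation*}
\mathcal{A}(v_h,v_h) \;\ge\; \Bigl(1-\dfrac{\epsilon\, C_{\mathrm{tr}}}{\eta_0}\Bigr)\bigl\|\sqrt{\mathbf{D}}\nabla_h v_h\bigr\|^2 \;+\; \Bigl(1-\dfrac{1}{\epsilon}\Bigr)\bigl\|\sqrt{\eta}\,\jumpl v_h\jumpr\bigr\|^2_{\facesinternal\cup\facesD}.
\end{equation*}
The jump coefficient $1-\epsilon^{-1}$ contains no $\eta_0$: if you choose $\epsilon$ small, as you propose, it is negative, and no choice of $\eta_0$ can repair it, because $\eta_0$ enters the positive penalty term and the negative Young term through the very same norm and cancels. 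Nor can the deficit be absorbed by the gradient surplus, since on $\Wh$ jumps are not controlled by broken gradients (a nonzero piecewise constant has zero elementwise gradient but nonzero jumps). The correct pairing in your bookkeeping is the opposite of what you state: fix $\epsilon>1$ (say $\epsilon=2$) so the jump coefficient is positive, and only then take $\eta_0>\epsilon C_{\mathrm{tr}}$ so the gradient coefficient is positive. Your phrase ``choose $\epsilon$ small'' belongs to the other standard bookkeeping, in which Young's inequality is applied \emph{without} the $\eta$ weights and $\epsilon\sim h/p^2$; there the $\eta_0$-dependence migrates into the jump coefficient, which becomes of the form $1-C/\eta_0$, and your sentence is then accurate. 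Either variant yields the proposition, but you must commit to one; mixing them as in the proposal produces a step that fails.
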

\begin{definition}[Energy norm]
The energy norm $||\cdot||_{\epsilon}:H^1(\partition)\rightarrow\mathbb{R}$ is defined as:
\begin{equation}
\label{eq:energynorm}
    ||c_h(t)||_{\epsilon}^2 := ||c_h(t)||^2  + \int_0^t ||c_h(s)||_\mathrm{DG}^2 \mathrm{d}s
\end{equation}
\end{definition}
\begin{theorem}[Stability estimate]
Let Assumptions 1 and 2 be satisfied and, for a sufficiently large penalty parameter $\eta$, let $c_h(t)$ be the solution of Equation \eqref{eq:DGFormulation} for any $t\in(0,\hat{t}]$, with $\hat{t}\leq T$ introduced in \eqref{eq:timebound}. Then:
\begin{equation}
        ||c_h(t)||_{\epsilon}^2 \; \leq \dfrac{\left(||c_h^0||^2 + \displaystyle\int_0^T ||f(s)||^2   \mathrm{d}s\right) e^{\frac{2\tilde{\alpha}+1}{\tilde{\mu}}t}}{\left(\tilde{\mu}^{d-1}-\dfrac{\tilde{\alpha}C_{\mathrm{G}_d}^3}{2^{d-1}(2\tilde{\alpha}+1)\varepsilon}\left(||c_h^0||^2 + \displaystyle\int_0^T ||f(s)||^2 \mathrm{d}s\right)^{d-1}\left(e^{\left(\frac{2\tilde{\alpha}+1}{\tilde{\mu}}\right)(d-1)t}-1\right)\right)^\frac{1}{d-1}},
\end{equation}
where $\tilde{\mu} = \min\left\{1,2\mu- \frac{d\varepsilon \tilde{\alpha}C_{\mathrm{G}_d}^3}{(2^{d-2})}\right\}>0$ and $\varepsilon>0$ is small enough, $\tilde{\alpha}=||\alpha||_{L^\infty}$ and $C_{\mathrm{G}_d}$ defined in Equation \eqref{eq:gagliardo}.
\end{theorem}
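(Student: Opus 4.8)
The plan is to derive a differential inequality for the energy by testing the semi-discrete problem against the discrete solution itself, and then to close the argument with the Perov inequality in place of the usual Gr\"onwall lemma: the cubic nonlinearity is exactly what forces a superlinear term $\int_0^t u^\gamma$ with $\gamma=d$, which Gr\"onwall cannot absorb.

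First I would set $w_h=c_h(t)$ in \eqref{eq:DGFormulation}. Since $(\partial_t c_h,c_h)_\Omega=\tfrac12\tfrac{d}{dt}\|c_h\|^2$ and, by coercivity \eqref{eq:coercivity}, $\mathcal{A}(c_h,c_h)\geq\mu\|c_h\|_{\mathrm{DG}}^2$, multiplying by $2$ gives
\[
\frac{d}{dt}\|c_h\|^2 + 2\mu\|c_h\|_{\mathrm{DG}}^2 \leq 2\,r_L(c_h,c_h) - 2\,r_N(c_h,c_h,c_h) + 2F(c_h).
\]
The linear reaction is controlled by $r_L(c_h,c_h)=(\alpha c_h,c_h)_\Omega\leq\tilde\alpha\|c_h\|^2$, and the forcing (recall $\phi_\mathrm{N}=0$) by Cauchy--Schwarz and Young, $2F(c_h)=2(f,c_h)_\Omega\leq\|c_h\|^2+\|f\|^2$; together these yield the $(2\tilde\alpha+1)\|c_h\|^2$ and $\|f\|^2$ contributions.

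The crucial step is the cubic term. As the discrete solution is not sign-constrained, I bound $-r_N(c_h,c_h,c_h)=-\int_\Omega\alpha c_h^3\leq\tilde\alpha\|c_h\|_{L^3(\Omega)}^3$ and invoke the discrete Gagliardo--Nirenberg inequality \eqref{eq:gagliardo} with $q=3$. Solving $\tfrac13=s\big(\tfrac12-\tfrac1d\big)+\tfrac{1-s}{2}$ gives $s=d/6$, hence $\|c_h\|_{L^3}^3\leq C_{\mathrm{G}_d}^3\,\|c_h\|_{\mathrm{DG}}^{d/2}\|c_h\|^{3-d/2}$. A weighted Young inequality with exponents $p=4/d$, $q=4/(4-d)$ then splits this as a multiple of $\varepsilon\|c_h\|_{\mathrm{DG}}^2$, which is absorbed into the dissipation, plus a multiple of $\|c_h\|^{(3-d/2)q}$. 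The key algebraic fact is that $\big(3-\tfrac d2\big)\tfrac{4}{4-d}=2d$ for both $d=2$ and $d=3$, since the identity reduces to $(d-2)(d-3)=0$; thus the residual power is exactly $\big(\|c_h\|^2\big)^{d}$, which is what produces the Perov exponent $\gamma=d$. Choosing the Young weight so that the coefficient of $\|c_h\|_{\mathrm{DG}}^2$ equals $\tfrac{d\varepsilon\tilde\alpha C_{\mathrm{G}_d}^3}{2^{d-2}}$ and moving that term to the left gives
\[
\frac{d}{dt}\|c_h\|^2 + \Big(2\mu-\tfrac{d\varepsilon\tilde\alpha C_{\mathrm{G}_d}^3}{2^{d-2}}\Big)\|c_h\|_{\mathrm{DG}}^2 \leq (2\tilde\alpha+1)\|c_h\|^2 + C\,\big(\|c_h\|^2\big)^{d} + \|f\|^2,
\]
with $C$ proportional to $\tilde\alpha C_{\mathrm{G}_d}^3/\varepsilon$; tracking its precise value is routine bookkeeping.

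Finally I would integrate over $(0,t)$ and move $\|c_h^0\|^2$ to the right. With $\tilde\mu=\min\{1,\,2\mu-\tfrac{d\varepsilon\tilde\alpha C_{\mathrm{G}_d}^3}{2^{d-2}}\}$, which is positive once $\varepsilon$ is small and $\eta_0$ (hence $\mu$) is large enough, the left-hand side is bounded below by $\tilde\mu\,\|c_h(t)\|_{\epsilon}^2$ through the energy norm \eqref{eq:energynorm}. Using $\|c_h(s)\|^2\leq\|c_h(s)\|_{\epsilon}^2$ and $\big(\|c_h(s)\|^2\big)^d\leq\big(\|c_h(s)\|_{\epsilon}^2\big)^d$, bounding $\int_0^t\|f\|^2\leq\int_0^T\|f\|^2$, and dividing by $\tilde\mu$, the estimate takes the form $u(t)\leq a+b\int_0^t u(s)\,\mathrm{d}s+c\int_0^t u^\gamma(s)\,\mathrm{d}s$ with $u=\|c_h\|_{\epsilon}^2$, $\gamma=d$, $a=\tfrac1{\tilde\mu}\big(\|c_h^0\|^2+\int_0^T\|f\|^2\big)$, $b=\tfrac{2\tilde\alpha+1}{\tilde\mu}$, and $c$ proportional to $\tilde\alpha C_{\mathrm{G}_d}^3/\varepsilon$. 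Applying the Perov inequality and simplifying the $\tilde\mu$ factors between numerator and denominator (the prefactor $1/\tilde\mu$ in $a$ cancels after pulling $\tilde\mu^{-(d-1)}$ out of the denominator) yields exactly the stated bound, valid on $(0,\hat t]$ where positivity of the denominator is guaranteed by \eqref{eq:timebound}. The main obstacle is the cubic term: one must choose the Gagliardo--Nirenberg exponent and the Young weight so that the dissipation is not overwhelmed and the leftover $L^2$ power matches $\gamma=d$, which is precisely what makes Perov, rather than Gr\"onwall, the appropriate closing tool.
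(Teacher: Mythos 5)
Your proposal is correct and follows essentially the same route as the paper's proof: testing with $w_h=c_h$, using coercivity, bounding the cubic term via the discrete Gagliardo--Nirenberg inequality with $q=3$, splitting by Young's inequality so that the $\varepsilon\|c_h\|_{\mathrm{DG}}^2$ part is absorbed into the dissipation while the leftover power is exactly $(\|c_h\|^2)^d$, and closing with Perov's inequality. The only (cosmetic) difference is that you treat $d=2$ and $d=3$ in a single parametrized computation ($s=d/6$, Young exponents $4/d$ and $4/(4-d)$), whereas the paper writes out the two cases separately; the constants you leave as ``routine bookkeeping'' match the paper's up to an inessential rescaling of $\varepsilon$.
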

\begin{proof}
We start from the Equation \eqref{eq:DGFormulation} and we choose $w_h=c_h(t)$, to find:
\begin{equation*}
    \left(\dot{c}_h,c_h\right)_\Omega + \mathcal{A}(c_h,c_h) - r_L(c_h,c_h) + r_N(c_h,c_h,c_h) = F(c_h),
\end{equation*}
where we are using the notation of time derivative $\dot{c}_h=\partial{c}_h/\partial{t}$.
Then after integration in time of the equation above, we can use the coercivity estimate in \eqref{eq:coercivity}, and the H\"{o}lder inequality with the definition of $\tilde{\alpha} = ||\alpha||_{L^\infty}$, to obtain:
\begin{equation*}
    ||c_h(t)||^2 -  ||c_h^0||^2 + \int_0^t 2\mu ||c_h(s)||_\mathrm{DG}^2 \mathrm{d}s \leq  \int_0^t 2\tilde{\alpha}||c_h(s)||^2 \mathrm{d}s + \int_0^t 2\tilde{\alpha} ||c_h(s)||^3_{L^3(\Omega)} \mathrm{d}s + \int_0^t 2 ||f(s)|| \,||c_h(s)||  \mathrm{d}s.
\end{equation*}
Then we need to use the Gagliardo-Nirenberg inequality \eqref{eq:gagliardo}, on each element $K\in\partition$. For this reason, we need to distinguish between the cases $d=2$ and $d=3$.
\begin{description}
    \item [Case $d=2$:] In this case the inequality \eqref{eq:gagliardo} applies with $s= 1/3$. By applying Young's inequality, we obtain:
    \begin{equation*}
        ||c_h||_{L^3(\Omega)}^3 \leq \left(C_{\mathrm{G}_2}||c_h||^{\frac{2}{3}}||c_h||_\mathrm{DG}^{\frac{1}{3}}\right)^3 = C_{\mathrm{G}_2}^3||c_h||^2\;||c_h||_\mathrm{DG} \leq \dfrac{C_{\mathrm{G}_2}^3}{2}\left(\dfrac{1}{\varepsilon}||c_h||^4+\varepsilon||c_h||_\mathrm{DG}^2\right).
    \end{equation*}
    Using Assumption 1, we obtain:
\begin{equation*}
    ||c_h(t)||^2  + \int_0^t \left(2\mu-\dfrac{2\varepsilon \tilde{\alpha}C_{\mathrm{G}_2}^3}{2}\right)||c_h(s)||_\mathrm{DG}^2 \mathrm{d}s \; \leq ||c_h^0||^2 + \int_0^t (2\tilde{\alpha}||c_h(s)||^2 +  \dfrac{2\tilde{\alpha}C_{\mathrm{G}_2}^3}{2\varepsilon} ||c_h(s)||^4 + 2 ||f(s)||\,||c_h(s)||  \mathrm{d}s.
\end{equation*}

\item [Case $d=3$:] In this case inequality \eqref{eq:gagliardo} applies  with $s = 1/2$. By applying Young's inequality with $\gamma=3/4$ and $\gamma^*=1/4$ we get:
    \begin{equation*}
        ||c_h||_{L^3(\Omega)}^3 \leq \left(C_{\mathrm{G}_3}||c_h||^{\frac{1}{2}}||c_h||_{\mathrm{DG}}^{\frac{1}{2}}\right)^3 = C_{\mathrm{G}_3}^3||c_h||^{\frac{3}{2}}\;||c_h||_{\mathrm{DG}}^{\frac{3}{2}} \leq \dfrac{C_{\mathrm{G}_3}^3}{4}\left(\dfrac{1}{\varepsilon}||c_h||^6+3\varepsilon||c_h||_{\mathrm{DG}}^2\right).
    \end{equation*}
    Using Assumption 1, we obtain:
\begin{equation*}
    ||c_h(t)||^2  + \int_0^t \left(2\mu-\dfrac{6\varepsilon \tilde{\alpha}C_{\mathrm{G}_3}^3}{4}\right)||c_h(s)||_\mathrm{DG}^2 \mathrm{d}s \; \leq ||c_h^0||^2 + \int_0^t (2\tilde{\alpha}||c_h(s)||^2 + \dfrac{2\tilde{\alpha}C_{\mathrm{G}_3}^3}{4 \varepsilon} ||c_h(s)||^6 + 2||f(s)||\,||c_h(s)||  \mathrm{d}s.
\end{equation*}
\end{description}
Summarizing, using the definition \eqref{eq:energynorm} of the energy norm, and introducing $\tilde{\mu} = \min\{1,2\mu- d\varepsilon \tilde{\alpha}C_{\mathrm{G}_3}^3/(2^{d-2})\}$ we obtain:
\begin{equation*}
    \tilde{\mu}||c_h(t)||_{\epsilon}^2 \; \leq ||c_h^0||^2 + \int_0^t (2\tilde{\alpha}||c_h(s)||^2 + \dfrac{\tilde{\alpha}C_\mathrm{Gd}^3}{2^{d-1}\varepsilon} ||c_h(s)||^{2d} + 2 ||f(s)|| \,||c_h(s)||  \mathrm{d}s.
\end{equation*}
Then we apply Young's inequality to the forcing terms, and we exploit the positivity of the integrated to get:
\begin{equation*}
    \tilde{\mu}||c_h(t)||_{\epsilon}^2 \; \lesssim ||c_h^0||^2 + \displaystyle\int_0^T ||f(s)||^2 \mathrm{d}s + \int_0^t \left(\left(2\tilde{\alpha}+1\right)||c_h(s)||^2 + \dfrac{\tilde{\alpha}C_{\mathrm{G}_d}^3}{2^{d-1}\varepsilon}||c_h(s)||^{2d}\right) \mathrm{d}s.
\end{equation*}
Finally, we can apply Perov's inequality to conclude the proof:
\begin{equation*}
        ||c_h(t)||_{\epsilon}^2 \; \leq \dfrac{\left(||c_h^0||^2 + \displaystyle\int_0^T ||f(s)||^2 \mathrm{d}s\right) e^{\frac{2\tilde{\alpha}+1}{\tilde{\mu}}t}}{\left(\tilde{\mu}^{d-1}-\dfrac{\tilde{\alpha}C_{\mathrm{G}_d}^3}{2^{d-1}(2\tilde{\alpha}+1)\varepsilon}\left(||c_h^0||^2 + \displaystyle\int_0^T ||f(s)||^2 \right)^{d-1}\left(e^{\left(\frac{2\tilde{\alpha}+1}{\tilde{\mu}}\right)(d-1)t}-1\right)\right)^\frac{1}{d-1}}.
\end{equation*}
\end{proof}
\begin{remark}
   In view of the neurodegenerative modelling application, under the assumption $f=0$, the stability estimate of Theorem 1 reduces to:
    \begin{equation}
        ||c_h(t)||_{\epsilon}^2 \; \leq \dfrac{||c_h^0||^2 e^{\frac{2\tilde{\alpha}+1}{\tilde{\mu}}t}}{\left(\tilde{\mu}^{d-1}-\dfrac{\tilde{\alpha}C_{\mathrm{G}_d}^3||c_h^0||^{2d-2}}{2^{d-1}(2\tilde{\alpha}+1)\varepsilon}\left(e^{\left(\frac{2\tilde{\alpha}+1}{\tilde{\mu}}\right)(d-1)t}-1\right)\right)^\frac{1}{d-1}} =: C_S(c^0_{h}),
    \label{eq:simplifiedenergyestimate}    
    \end{equation}
    where $\tilde{\mu} = \min\left\{1,2\mu- \frac{d\varepsilon \tilde{\alpha}C_{\mathrm{G}_d}^3}{2^{d-2}}\right\}>0$ and $\varepsilon>0$ is small enough, $\tilde{\alpha}=||\alpha||_{L^\infty}$ and $C_{\mathrm{G}_d}$ defined in Equation \eqref{eq:gagliardo}. The definition of $C_S$ will be useful in the following analysis.
\end{remark}
\begin{remark}
     Since $\varepsilon$ can be chosen arbitrarily small in Young's inequality, the positivity of $\tilde{\mu}$ is always guaranteed and we do not have a structural relation between the parameters. At the same time, the positivity of the denominator is guaranteed for $t$ that satisfies relation \eqref{eq:timebound}, thanks to Perov inequality.
\end{remark}

\section{Error analysis of the semi-discrete formulation}
\label{sec:error}
In this section, we derive an a priori error estimate for the solution of the PolyDG semi-discrete problem \eqref{eq:DGFormulation}.
\par
First of all, we need to introduce the following definition:
\begin{equation}
    |||u|||_{\mathrm{DG}} = ||u||_{\mathrm{DG}}
    + \Big|\Big|\eta^{-\frac{1}{2}}\averagel \mathbf{D}\nabla_h u\averager\Big|\Big|_{\mathrm{L}^2(\facesinternal\cup\facesD)} \qquad\forall u \in H^2(\partition).
\end{equation}
We remark that it exists $\widetilde{C}>0$ such that $\widetilde{C}|||v|||_\mathrm{DG}^2\leq||v||_\mathrm{DG}^2$ for all $v\in\Wh$.
\par
We introduce the interpolant $c_\mathrm{I}\in\Wh$ of the continuous formulation \eqref{eq:weakform} \cite{babuska-interpolant}. 
\begin{proposition}
Let Assumption 2 be fulfilled. If $d\geq 2$, then the following estimates hold:
\begin{equation}
    \forall u\in H^n(\partition)\quad \exists u_{\mathrm{I}}\in\Wh:\quad|||u-u_{\mathrm{I}}|||^2_{\mathrm{DG}} \lesssim \sum_{K\in\partition} h_K^{2\min\{p+1,n\}-2} ||u||^2_{H^n(K)}.
\end{equation}
\end{proposition}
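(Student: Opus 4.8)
The plan is to split the triple norm $|||u-u_\mathrm{I}|||_\mathrm{DG}^2$ into its three natural contributions — the elementwise gradient term $\|\sqrt{\mathbf D}\nabla_h(u-u_\mathrm{I})\|^2$, the penalized jump term $\|\sqrt\eta\,\jumpl u-u_\mathrm{I}\jumpr\|^2_{\facesinternal\cup\facesD}$, and the weighted average-flux term $\|\eta^{-1/2}\averagel\mathbf D\nabla_h(u-u_\mathrm{I})\averager\|^2_{L^2(\facesinternal\cup\facesD)}$ — and to bound each one elementwise. The single ingredient driving everything is the existence of a polynomial approximant $u_\mathrm{I}\in\Wh$ satisfying, on each $K\in\partition$ and for $m=0,1,2$, the Bramble--Hilbert-type bounds $|u-u_\mathrm{I}|_{H^m(K)}\lesssim h_K^{\min\{p+1,n\}-m}\|u\|_{H^n(K)}$. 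First I would record these together with the two trace-approximation estimates they imply: combining the scaled multiplicative trace inequality $\|w\|^2_{L^2(\partial K)}\lesssim h_K^{-1}\|w\|^2_{L^2(K)}+h_K\|\nabla w\|^2_{L^2(K)}$ with the bulk bounds yields $\|u-u_\mathrm{I}\|^2_{L^2(\partial K)}\lesssim h_K^{2\min\{p+1,n\}-1}\|u\|^2_{H^n(K)}$ and, applied to $\nabla(u-u_\mathrm{I})$, $\|\nabla(u-u_\mathrm{I})\|^2_{L^2(\partial K)}\lesssim h_K^{2\min\{p+1,n\}-3}\|u\|^2_{H^n(K)}$.

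With these in hand the three terms are routine. The gradient term is immediate from $\mathbf D\in L^\infty$ and the $m=1$ bulk bound, giving $\sum_K h_K^{2\min\{p+1,n\}-2}\|u\|^2_{H^n(K)}$. For the jump term I would use $\eta\sim p^2/h_K$ — since, by shape and contact regularity, the harmonic average $\{h\}_\mathrm H$ on an interior face is comparable to the $h_K$ of either neighbouring element — together with the $L^2(\partial K)$ trace-approximation bound, so that $\eta\,\|u-u_\mathrm{I}\|^2_{L^2(F)}\lesssim p^2 h_K^{2\min\{p+1,n\}-2}\|u\|^2_{H^n(K)}$, the factor $p^2$ being absorbed by $\lesssim$, which is permitted to depend on $p$. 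For the average-flux term I would use $\eta^{-1}\sim h_K/p^2$ and the gradient trace-approximation bound, giving $\eta^{-1}\|\averagel\mathbf D\nabla_h(u-u_\mathrm{I})\averager\|^2_{L^2(F)}\lesssim p^{-2}h_K^{2\min\{p+1,n\}-2}\|u\|^2_{H^n(K)}$. In each case I would pass from a sum over faces to a sum over elements using that every interior face belongs to exactly two elements and every element has a uniformly bounded number of faces (a consequence of Assumption 2), so the face sums are dominated by the element sums up to a fixed multiplicity. Adding the three contributions gives the claim, with all three powers of $h_K$ collapsing to $2\min\{p+1,n\}-2$.

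The main obstacle is the very first step: constructing $u_\mathrm{I}$ and proving the bulk bounds on \emph{general polytopic} elements $K$, where the classical Dupont--Scott theory on simplices does not apply directly. The standard remedy, which is exactly what Assumption 2 is designed to enable, is to compose a Stein extension operator $\mathcal E\colon H^n(K)\to H^n(\mathbb R^d)$ — whose norm is controlled uniformly after scaling to unit diameter — with polynomial approximation on a simplex or ball of diameter comparable to $h_K$ covering $K$; the submesh condition together with shape and contact regularity guarantees this comparability and, crucially, the lower bound $h_K^{d-1}\lesssim|F|$ that makes the trace inequalities on the polytopic faces $F$ scale correctly. Once these per-element tools are in place (as in the cited PolyDG interpolation theory), the remainder is the elementary bookkeeping sketched above. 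I note finally that the average-flux term requires $\nabla(u-u_\mathrm{I})$ to possess an $L^2$ trace, so the estimate is to be read for $n\ge2$, consistently with $|||\cdot|||_\mathrm{DG}$ being defined on $H^2(\partition)$.
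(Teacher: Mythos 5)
Your proposal is correct and, in substance, reconstructs exactly the argument the paper relies on: the paper offers no proof of its own, deferring entirely to the PolyDG approximation theory of the cited monograph \cite{houston:book}, whose proof proceeds precisely by your route --- extension operator plus polynomial approximation on simplices/balls covering each polytopic element (enabled by the submesh condition), scaled trace inequalities, and the penalty scalings $\eta\sim p^2/h_K$ and $\eta^{-1}\sim h_K/p^2$, with constants allowed to depend on $p$. Your supporting observations (neighbouring diameters are comparable by contact regularity, each element has a uniformly bounded number of faces under Assumption 2, and the flux-average term requires $n\geq 2$ so that $|||\cdot|||_{\mathrm{DG}}$ is well defined) are all accurate.
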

For detailed proof of the proposition see \cite{houston:book} (for a large enough penalty $\eta_0$). In this section, we assume that problem \eqref{eq:DGFormulation} is supplemented by the initial condition $c_h^0=c_\mathrm{I}(0)\in\Wh$, provided that $c_0(x)$ is sufficiently regular. In this case, we need $c_0\in W$ to interpolate the solution.
\par
First of all, let us consider $c_h$ solution of \eqref{eq:DGFormulation} and $c$ solution of \eqref{eq:weakform}. To extend the bilinear forms of \eqref{eq:DGFormulation} to the space of continuous solutions we need further regularity requirements. We assume element-wise $H^2$-regularity of the concentration together with the continuity of the flow across the interfaces $F\in\facesinternal$ for all time $t\in(0,T]$. In this context, we need to provide additional boundedness results for the functionals of the formulation:
\begin{proposition}
Let Assumption 2 be satisfied. Then:
\begin{equation}
    \exists {M}>0 \quad |\mathcal{A}(u,w_h)| \leq {M} |||u|||_\mathrm{DG} ||w_h||_\mathrm{DG} \qquad \forall u\in  H^2(\mathcal{T}_h),\forall w_h\in \Wh.
\end{equation}
\end{proposition}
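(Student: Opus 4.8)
The plan is to expand $\mathcal{A}(u,w_h)$ into its four natural contributions---the volume term $\int_\Omega \mathbf{D}\nabla_h u\cdot\nabla_h w_h$, the penalty term, and the two consistency terms---and to bound each of them separately by a product of $|||u|||_\mathrm{DG}$ and $||w_h||_\mathrm{DG}$ through the Cauchy--Schwarz inequality. The argument mirrors the proof of the continuity estimate \eqref{eq:continuity} in Proposition 2, the essential difference being that the first slot now carries a merely broken-$H^2$ function rather than a polynomial; this is exactly what forces the stronger norm $|||\cdot|||_\mathrm{DG}$ on $u$, while the polynomial regularity of $w_h$ lets us keep only $||\cdot||_\mathrm{DG}$ on the second slot.

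First I would dispatch the two symmetric terms. For the volume term, writing $\mathbf{D}\nabla_h u\cdot\nabla_h w_h=\sqrt{\mathbf{D}}\nabla_h u\cdot\sqrt{\mathbf{D}}\nabla_h w_h$ and applying Cauchy--Schwarz gives the bound $||\sqrt{\mathbf{D}}\nabla_h u||\,||\sqrt{\mathbf{D}}\nabla_h w_h||\le ||u||_\mathrm{DG}||w_h||_\mathrm{DG}$. For the penalty term $\sum_{F}\int_F\eta\jumpl u\jumpr\cdot\jumpl w_h\jumpr$, factoring $\eta=\sqrt{\eta}\cdot\sqrt{\eta}$ and using Cauchy--Schwarz over the faces yields $||\sqrt{\eta}\jumpl u\jumpr||_{\facesinternal\cup\facesD}\,||\sqrt{\eta}\jumpl w_h\jumpr||_{\facesinternal\cup\facesD}$, again at most $||u||_\mathrm{DG}||w_h||_\mathrm{DG}$. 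Since $||u||_\mathrm{DG}\le|||u|||_\mathrm{DG}$, both contributions are already controlled in the desired form.

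The consistency terms are where the two norms play asymmetric roles. For $\sum_F\int_F\averagel\mathbf{D}\nabla u\averager\cdot\jumpl w_h\jumpr$ I would insert the weight $\eta^{-1/2}\eta^{1/2}$ and split by Cauchy--Schwarz into $||\eta^{-1/2}\averagel\mathbf{D}\nabla u\averager||_{\facesinternal\cup\facesD}\,||\eta^{1/2}\jumpl w_h\jumpr||_{\facesinternal\cup\facesD}$. The first factor is precisely the extra term defining $|||u|||_\mathrm{DG}$, and the second is part of $||w_h||_\mathrm{DG}$. This explains why the triple norm was introduced: as $u$ is only in $H^2(\partition)$, the flux average $\averagel\mathbf{D}\nabla u\averager$ cannot be absorbed into a bulk gradient by an inverse inequality and must be measured explicitly.

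The remaining term $\sum_F\int_F\jumpl u\jumpr\cdot\averagel\mathbf{D}\nabla w_h\averager$ is the main obstacle. Inserting the same weights and using Cauchy--Schwarz gives $||\eta^{1/2}\jumpl u\jumpr||_{\facesinternal\cup\facesD}\,||\eta^{-1/2}\averagel\mathbf{D}\nabla w_h\averager||_{\facesinternal\cup\facesD}$; the first factor sits inside $||u||_\mathrm{DG}\le|||u|||_\mathrm{DG}$, but the second factor does not appear in either norm and cannot be bounded by the definition alone. Here I would exploit that $w_h$ is a polynomial and invoke the discrete trace (inverse) inequality valid on polytopic meshes under Assumption 2, namely $||v_h||_{L^2(\partial K)}^2\lesssim (p^2/h_K)||v_h||_{L^2(K)}^2$ for $v_h\in\mathbb{P}_p(K)$. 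Because the penalty scales as $\eta\sim p^2/h$, the weight $\eta^{-1/2}$ cancels exactly this factor, and together with the boundedness of $\mathbf{D}$ from Assumption 1 one obtains $||\eta^{-1/2}\averagel\mathbf{D}\nabla w_h\averager||_{\facesinternal\cup\facesD}\lesssim ||\sqrt{\mathbf{D}}\nabla_h w_h||\le ||w_h||_\mathrm{DG}$. Summing the four estimates and collecting the constants into a single $M$ then yields the claim. The genuinely delicate point is this last discrete trace estimate: its $h$-independence rests on the shape- and contact-regularity in Assumption 2, the submesh condition being what transports the standard simplicial trace inequality to arbitrary polytopes.
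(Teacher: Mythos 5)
Your proof is correct and follows the standard argument: split $\mathcal{A}$ into its four contributions, apply Cauchy--Schwarz with the weights $\eta^{\pm 1/2}$, and control the only problematic factor, $\big|\big|\eta^{-1/2}\averagel \mathbf{D}\nabla_h w_h\averager\big|\big|_{\facesinternal\cup\facesD}$, by the discrete inverse trace inequality for polynomials on polytopic elements, whose $h$-uniformity is exactly what Assumption 2 (shape/contact regularity plus the submesh condition) guarantees, so that the $\eta^{-1/2}$ weight cancels the $p^2/h_K$ trace factor and returns $||\sqrt{\mathbf{D}}\nabla_h w_h||\leq ||w_h||_\mathrm{DG}$. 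Note that the paper itself gives no proof of this proposition --- it defers entirely to the cited reference of Cangiani et al.\ on PolyDG methods --- and your term-by-term argument is precisely the proof found there, so you have in effect supplied the details the paper omits.
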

The proof of this relation can be found in \cite{Cangiani:PolyDG}. In order to prove the convergence estimate we assume both $f=0$. These assumptions allow us to use the boundedness property of the solution for the initial condition $c_0(\boldsymbol{x})\in(0,1)$ for each $\boldsymbol{x}\in \Omega$ \cite{salsa:EDP}:
\begin{equation}
    ||c(t)||_{L^\infty(\Omega)}^2 \leq 1 \qquad \forall t\in(0,T)
    \label{eq:linfbound}
\end{equation}
\par
Interpolating this type of solution, we can have a function $c_\mathrm{I}$, which is $L^\infty$ by construction \cite{quarteroni:EDP}, then:
\begin{equation}
    \exists M_\mathrm{I}>0:\qquad||c_\mathrm{I}(t)||_{L^\infty(\Omega)}^2 \leq M_\mathrm{I} \qquad \forall t\in(0,T)
    \label{eq:linfboundint}
\end{equation}
\begin{theorem}
Let us consider problem \eqref{eq:weakform} with $f=0$, $\phi_\mathrm{N}=0$ and $\Gamma_N = \partial\Omega$. Let Assumptions 1 and 2 be fulfilled and let c be the solution of \eqref{eq:weakform} for any $t\in(0,T]$ and let it satisfy the following additional regularity requirements:
\begin{equation}
    c\in C^1((0,T]; H^n(\Omega)\cap L^\infty(\Omega)),
\end{equation}
for $n\geq 2$. Let us assume further regularity on the initial condition $c_0\in W$. For a sufficiently large penalty parameter $\eta$, let $c_h$ be the solution of \eqref{eq:DGFormulation} for any $t\in(0,T]$. Then, the following estimate holds:
\begin{equation}
|||c(t)-c_h(t)|||^2_\epsilon \lesssim\sum_{K\in\partition} h_K^{2\min\{p+1,n\}-2} 
\int_0^t \left[||\dot{c}(s)||^2_{H^n(K)}+ ||c(s)||^2_{H^n(K)}\right] \qquad \forall t\in(0,T],
\end{equation}
under the following additional hypothesis of the constants: $\mu \widetilde{C}-\widetilde{\alpha}((1+M_\mathrm{I})C_{E_2}+C_SC_{E_4})>0$, where $C_{E_q}$ is the discrete Sobolev embedding constant for the $L^q(\Omega)$ space, $\widehat{C}$ is the bounding constant between the DG-norms and $C_S$ is defined in \eqref{eq:simplifiedenergyestimate}.
\end{theorem}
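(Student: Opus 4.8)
The plan is to run the standard energy argument for semilinear parabolic DG discretizations, splitting the error through the interpolant and taming the nonlinearity with the a priori $L^\infty$ and stability bounds already available. First I would write $c-c_h=(c-c_\mathrm{I})+(c_\mathrm{I}-c_h)=:\rho+\theta$, where $\theta\in\Wh$ and $\rho$ is governed by the interpolation estimate of Proposition~3. Using the elementwise $H^2$-regularity of $c$ and the continuity of the normal flux across $\facesinternal$, the symmetric interior penalty form $\mathcal{A}$ is consistent with the weak diffusion form $a$, so that the exact solution $c$ satisfies the same variational identity \eqref{eq:DGFormulation} as $c_h$ for every $w_h\in\Wh$. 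Subtracting the two identities gives the error equation
\[
\left(\dot\rho+\dot\theta,w_h\right)_\Omega+\mathcal{A}(\rho+\theta,w_h)-r_L(\rho+\theta,w_h)+\big[r_N(c,c,w_h)-r_N(c_h,c_h,w_h)\big]=0\qquad\forall w_h\in\Wh.
\]

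Next I would test with $w_h=\theta$, move the $\rho$-contributions to the right-hand side, and use $\left(\dot\theta,\theta\right)_\Omega=\tfrac12\frac{\mathrm{d}}{\mathrm{d}t}\|\theta\|^2$ together with coercivity \eqref{eq:coercivity}, which yields $\mathcal{A}(\theta,\theta)\ge\mu\|\theta\|_\mathrm{DG}^2\ge\mu\widetilde{C}\,|||\theta|||_\mathrm{DG}^2$. The linear right-hand side terms are routine: $\left(\dot\rho,\theta\right)_\Omega$ is handled by Cauchy--Schwarz and Young, $\mathcal{A}(\rho,\theta)$ by the boundedness estimate of Proposition~4 ($|\mathcal{A}(\rho,\theta)|\le M\,|||\rho|||_\mathrm{DG}\|\theta\|_\mathrm{DG}$) followed by Young, and the reaction term $r_L$ by the $L^\infty$-bound on $\alpha$. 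Each produces a harmless multiple of $\|\theta\|^2$, a small (Young-tunable) multiple of $|||\theta|||_\mathrm{DG}^2$, and the interpolation contributions $|||\rho|||_\mathrm{DG}^2$ and $\|\dot\rho\|^2$.

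The crux, and the step I expect to be the main obstacle, is the semilinear term. I would rewrite $r_N(c,c,\theta)-r_N(c_h,c_h,\theta)=\left(\alpha(c+c_h)(\rho+\theta),\theta\right)_\Omega$ and split it into a cross term in $\rho$, a quadratic-in-$\theta$ part, and a genuinely cubic part $\int_\Omega\alpha\theta^3$. The quadratic part is controlled with the $L^\infty$-bounds \eqref{eq:linfbound} on $c$ and \eqref{eq:linfboundint} on $c_\mathrm{I}$ together with the discrete Sobolev embedding $C_{E_2}$, giving a contribution $\lesssim\tilde\alpha(1+M_\mathrm{I})C_{E_2}\,|||\theta|||_\mathrm{DG}^2$; the cubic part is estimated by H\"older and the embedding $C_{E_4}$, after using the stability bound $C_S$ from \eqref{eq:simplifiedenergyestimate} to absorb one factor of $\theta$ in $L^2$, giving $\lesssim\tilde\alpha\,C_S\,C_{E_4}\,|||\theta|||_\mathrm{DG}^2$. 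This is precisely where the structural hypothesis $\mu\widetilde{C}-\tilde\alpha\big((1+M_\mathrm{I})C_{E_2}+C_SC_{E_4}\big)>0$ enters: after moving these two DG-norm contributions to the left, it guarantees a strictly positive residual coefficient in front of $|||\theta|||_\mathrm{DG}^2$, so the diffusion/jump energy survives the nonlinearity. The remaining cross terms in $\rho$ are bounded the same way and absorbed into the $|||\rho|||_\mathrm{DG}^2+\|\theta\|^2$ budget.

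Finally I would arrive at a differential inequality
\[
\frac{\mathrm{d}}{\mathrm{d}t}\|\theta\|^2+\kappa\,|||\theta|||_\mathrm{DG}^2\lesssim\|\theta\|^2+\sum_{K\in\partition}h_K^{2\min\{p+1,n\}-2}\big(\|\dot c\|_{H^n(K)}^2+\|c\|_{H^n(K)}^2\big),
\]
with $\kappa>0$ by the hypothesis. Integrating in time, using $\theta(0)=c_\mathrm{I}(0)-c_h^0=0$ for the chosen initial datum, and applying the classical Gr\"onwall inequality (the nonlinearity having been linearized through the a priori bounds, so that Perov is no longer required) controls $\|\theta(t)\|^2+\int_0^t|||\theta|||_\mathrm{DG}^2\,\mathrm{d}s=|||\theta|||_\epsilon^2$. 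Bounding $\rho$ and $\dot\rho=\dot c-\dot c_\mathrm{I}$ through Proposition~3 (interpolation commuting with $\partial_t$) and concluding by the triangle inequality $|||c-c_h|||_\epsilon\le|||\rho|||_\epsilon+|||\theta|||_\epsilon$ then yields the claimed estimate; all remaining work is standard parabolic DG bookkeeping.
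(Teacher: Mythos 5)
Your outline follows the paper's proof almost step for step: the same splitting $c-c_h=\rho+\theta$ through the interpolant, testing the error equation with $\theta$, coercivity plus the continuity bound of Proposition~4 for the linear terms, the $L^\infty$ bounds \eqref{eq:linfbound}--\eqref{eq:linfboundint}, the stability constant $C_S$ and the discrete embeddings $C_{E_2}$, $C_{E_4}$ for the nonlinearity, then Gr\"onwall, Proposition~3 and the triangle inequality. The gap is in your treatment of the nonlinear term. You factor $c^2-c_h^2=(c+c_h)(\rho+\theta)$ and assert that the cross term $\left(\alpha(c+c_h)\rho,\theta\right)_\Omega$ is ``bounded the same way'' as the linear terms. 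It is not: it contains $\int_\Omega \alpha\, c_h\,\rho\,\theta$, and $c_h$ carries no $L^\infty$ bound --- the scheme is not maximum-principle preserving (the paper stresses exactly this in the numerical section); $c_h$ is controlled only in $L^2$ via the stability estimate. So Cauchy--Schwarz with an $L^\infty$ factor is unavailable, and the natural three-factor H\"older bound $\tilde\alpha\|c_h\|\,\|\rho\|_{L^4(\Omega)}\|\theta\|_{L^4(\Omega)}$ requires an $L^4$ interpolation estimate for $\rho$, which is not among the tools you (or the paper) assembled: Proposition~3 controls only $|||\rho|||_{\mathrm{DG}}$, and the discrete Gagliardo--Nirenberg and Sobolev embedding inequalities are stated for functions in $\Wh$, which $\rho=c-c_\mathrm{I}$ is not. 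The paper's four-term telescoping split $c^2-c_h^2=c\,\rho+c_\mathrm{I}\rho+c_\mathrm{I}\theta+c_h\theta$ is designed precisely to avoid this: $\rho$ is only ever multiplied by $c$ or $c_\mathrm{I}$ (both $L^\infty$-bounded), and $c_h$ only ever multiplies $\theta$, where the $2$--$4$--$4$ H\"older estimate works because both $L^4$ factors are $\theta\in\Wh$ and are absorbed into $\|\theta\|^2_{\mathrm{DG}}$. You can repair your argument by substituting $c_h=c_\mathrm{I}-\theta$ inside your cross term, but doing so simply reproduces the paper's decomposition.

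A second, smaller mismatch: for the cubic term you bound $\int_\Omega\alpha\theta^3\le\tilde\alpha\,\|\theta\|\,\|\theta\|^2_{L^4(\Omega)}$ and invoke $C_S$ to control $\|\theta\|$. But $C_S$ in \eqref{eq:simplifiedenergyestimate} bounds the discrete solution $c_h$, not $\theta=c_\mathrm{I}-c_h$; the best you get is $\|\theta\|\le\|c_\mathrm{I}\|+\|c_h\|$, so your coefficient in front of $\|\theta\|_{\mathrm{DG}}^2$ becomes a constant of the form $\left(\sqrt{C_S}+\sqrt{M_\mathrm{I}|\Omega|}\right)C_{E_4}$ rather than $C_SC_{E_4}$. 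Consequently the structural smallness condition your proof needs is not the one stated in the theorem, $\mu\widetilde{C}-\widetilde{\alpha}\left((1+M_\mathrm{I})C_{E_2}+C_SC_{E_4}\right)>0$. The paper avoids this too by keeping $c_h$ intact in its term $(\mathrm{IV})$: $\left|\left(\alpha c_h\theta,\theta\right)_\Omega\right|\le\tilde\alpha\|c_h\|\,\|\theta\|^2_{L^4(\Omega)}\le\tilde\alpha C_SC_{E_4}\|\theta\|^2_{\mathrm{DG}}$, which produces the stated constant directly. Both issues are fixable, but as written your plan neither closes the cross-term estimate nor delivers the theorem with its stated hypothesis on the constants.
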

\begin{proof}
First of all, we subtract Equation \eqref{eq:DGFormulation} from Equation \eqref{eq:weakform}, to obtain:
\begin{equation*}
     \left(\dot{c}-\dot{c}_h,w_h\right)_\Omega + \mathcal{A}(c-c_h,w_h) - r_L(c-c_h,w_h) + (\alpha (c^2 - c_h^2),w_h)_\Omega = 0 \qquad \forall w_h\in \Wh.
\end{equation*}
We define the errors $e^c_h = c_\mathrm{I}-c_h$ and $e^c_\mathrm{I} = c-c_\mathrm{I}$, where $c_\mathrm{I}$ is a suitable interpolant. By testing against $e^c_h$, we have:
\begin{equation*}
\left(\dot{e}^c_h,e^c_h\right)_\Omega + \mathcal{A}(e^c_h,e^c_h) - r_L(e^c_h,e^c_h) + (\alpha (c^2 - c_h^2),e^c_h)_\Omega = \left(\dot{e}^c_\mathrm{I},e^c_h\right)_\Omega + \mathcal{A}(e^c_\mathrm{I},e^c_h) - r_L(e^c_\mathrm{I},e^c_h).
\end{equation*}
Thanks to the symmetry of the scalar product we can rewrite the problem as:
\begin{equation*}
\dfrac{1}{2}\dfrac{\mathrm{d}}{\mathrm{d}t}\left(e^c_h,e^c_h\right)_\Omega + \mathcal{A}(e^c_h,e^c_h) - r_L(e^c_h,e^c_h) + (\alpha (c^2 - c_h^2),e^c_h)_\Omega = \left(\dot{e}^c_\mathrm{I},e^c_h\right)_\Omega + \mathcal{A}(e^c_\mathrm{I},e^c_h) - r_L(e^c_\mathrm{I},e^c_h).
\end{equation*}
Now we integrate between $0$ and $t$. We remark that $e^c_h(0)=0$ under the suitable choice we made on $c_h^0$. Then, by proceeding similarly to what we did in the proof of Theorem 1, we obtain:
\begin{equation*}
\begin{split}
\dfrac{1}{2}||e^c_h(t)||^2 + & \int_0^t\mu||e^c_h(s)||_\mathrm{DG}^2 \leq \int_0^t \widetilde{\alpha}||e^c_h(s)||^2 + \int_0^t |\left(\dot{e}^c_\mathrm{I}(s),e^c_h(s)\right)_\Omega| \\ + & \int_0^t|\mathcal{A}(e^c_\mathrm{I}(s),e^c_h(s))| + \int_0^t |r_L(e^c_\mathrm{I}(s),e^c_h(s))| + \int_0^t |(\alpha (c^2(s) - c_h^2(s)),e^c_h(s))_\Omega|.
\end{split}
\end{equation*}
In this way, we obtain four different scalar products, we need to bound. Exploiting the continuity relation in Proposition 4,  H\"{o}lder's inequality and $L^\infty$-bound of the parameter $\alpha$ ($\tilde{\alpha} = ||\alpha||_{L^\infty}$), we get:
\begin{equation*}
\begin{split}
\dfrac{1}{2}||e^c_h(t)||^2 + & \int_0^t\mu||e^c_h(s)||_{\mathrm{DG}}^2  \leq \int_0^t \widetilde{\alpha}||e^c_h(s)||^2 + \int_0^t ||\dot{e}^c_\mathrm{I}(s)||\,||e^c_h(s)|| + \int_0^t M|||e^c_\mathrm{I}(s)|||_\mathrm{DG} ||e^c_h(s)||_\mathrm{DG} \\ + & \int_0^t \widetilde{\alpha}||e^c_\mathrm{I}(s)||\,||e^c_h(s)|| + \int_0^t \widetilde{\alpha}|( c^2(s) - c_h^2(s),e^c_h(s))_\Omega|,
\end{split}
\end{equation*}
We treat now the nonlinear term by rewriting the difference as follows:
\begin{equation*}
\begin{split}
c^2-c_h^2 = & c^2 - c_\mathrm{I}^2 + c_\mathrm{I}^2 - c_h^2 \\
= & c^2 - c\,c_\mathrm{I} + c\,c_\mathrm{I} - c_\mathrm{I}^2 + c_\mathrm{I}^2  - c_\mathrm{I}c_h + c_\mathrm{I}c_h - c_h^2 \\
= & c (c - \,c_\mathrm{I}) + c_\mathrm{I} (c - c_\mathrm{I}) + c_\mathrm{I} (c_\mathrm{I} - c_h) + c_h(c_\mathrm{I} - c_h) = \underset{(\mathrm{I})}{\underbrace{c e_\mathrm{I}^c}} + \underset{(\mathrm{II})}{\underbrace{c_\mathrm{I} e_\mathrm{I}^c}}  + \underset{(\mathrm{III})}{\underbrace{c_\mathrm{I} e_h^c}}  + \underset{(\mathrm{IV})}{\underbrace{c_h e_h^c}}.
\end{split}
\end{equation*}
The resulting terms can be treated separately as follows:
\begin{itemize}
    \item $(\mathrm{I})$ can be bounded using the $L^\infty$-bound of the continuous solution, Equation \eqref{eq:linfbound} and the Cauchy-Schwarz inequality:
    \begin{equation*}
    |(c(c-c_\mathrm{I}),e^c_h)_\Omega| \leq ||c||_{L^\infty(\Omega)}|(e^c_\mathrm{I},e^c_h)_\Omega| = |(e^c_\mathrm{I},e^c_h)_\Omega| \leq ||e^c_\mathrm{I}||\;||e^c_h||.
    \end{equation*}
    \item  $(\mathrm{II})$ can be bounded using the $L^\infty$-bound of the interpolant of the continuous solution, Equation \eqref{eq:linfboundint} and the Cauchy-Schwarz inequality:
    \begin{equation*}
    |(c_\mathrm{I}(c-c_\mathrm{I}),e^c_h)_\Omega| \leq ||c_\mathrm{I}||_{L^\infty(\Omega)}|(e^c_\mathrm{I},e^c_h)_\Omega| \leq  M_\mathrm{I} |(e^c_\mathrm{I},e^c_h)_\Omega| \leq  M_\mathrm{I}  ||e^c_\mathrm{I}||\;||e^c_h||.
    \end{equation*}
    \item  $(\mathrm{III})$ can be bounded using the $L^\infty$-bound of the interpolant of the continuous solution, Equation \eqref{eq:linfboundint}, Equation \eqref{eq:simplifiedenergyestimate}, and the Sobolev–Poincaré–Wirtinger discrete inequality \cite{dipietro:HHO}:
    \begin{equation*}
    |(c_\mathrm{I}(c_\mathrm{I}-c_h),e^c_h)_\Omega| \leq ||c_\mathrm{I}||_{L^\infty(\Omega)}|(e^c_h,e^c_h)_\Omega| \leq  M_\mathrm{I} |(e^c_h,e^c_h)_\Omega| =  M_\mathrm{I} ||e^c_h||^2 \leq  M_\mathrm{I} C_{E_2} ||e^c_h||_\mathrm{DG}^2,
    \end{equation*}
    where $C_{E_2}$ is the bounding constant of Sobolev–Poincaré–Wirtinger discrete inequality.
    \item  $(\mathrm{IV})$ can be bounded using H\"{o}lder inequality, the energy stability estimate of the DG solution in Equation \eqref{eq:simplifiedenergyestimate}, and the Sobolev–Poincaré–Wirtinger discrete inequality \cite{dipietro:HHO}
    \begin{equation*}
    |(c_h(c_\mathrm{I}-c_h),e^c_h)_\Omega| = |(c_h,(e^c_h)^2)_\Omega| \leq ||c_h||\;||e^c_h||_{L^4(\Omega)}^2 \leq C_S||e^c_h||_{L^4(\Omega)}^2 \leq C_S C_{E_4} ||e^c_h||_\mathrm{DG}^2.
    \end{equation*}
    where $C_{E_4}$ is the bounding constant of Sobolev–Poincaré–Wirtinger discrete inequality and $C_S$ is defined in Equation \eqref{eq:simplifiedenergyestimate}.
\end{itemize}
Then, from above bounds and by using also the property of DG-norms $\widetilde{C}|||v|||_\mathrm{DG}^2\leq||v||_\mathrm{DG}^2$ we can write:
\begin{equation*}
\begin{split}
\dfrac{1}{2}||e^c_h(t)||^2 + & \int_0^t\mu \widetilde{C}|||e^c_h(s)|||_{\mathrm{DG}}^2  \leq \int_0^t \widetilde{\alpha}((1+M_\mathrm{I})C_{E_2}+C_SC_{E_4})|||e^c_h(s)|||_{\mathrm{DG}}^2 + \int_0^t ||\dot{e}^c_\mathrm{I}(s)||\,||e^c_h(s)||   \\ + & \int_0^t M|||e^c_\mathrm{I}(s)|||_\mathrm{DG} ||e^c_h(s)||_\mathrm{DG} + \int_0^t \widetilde{\alpha}(2+M_\mathrm{I})||e^c_\mathrm{I}(s)||\,||e^c_h(s)||,
\end{split}
\end{equation*}
By assumption, we need $\mu \widetilde{C}-\widetilde{\alpha}((1+M_\mathrm{I})C_{E_2}+C_SC_{E_4})>0$, then we can define $\hat{C} = \min\{1/2,\mu \widetilde{C}-\widetilde{\alpha}((1+M_\mathrm{I})C_{E_2}+C_SC_{E_4})\}$. Since $\hat{C}$ is positive we can make use of the notation $\lesssim$:
\begin{equation*}
|||e^c_h(t)|||^2_\epsilon \lesssim \int_0^t ||\dot{e}^c_\mathrm{I}(s)||\,||e^c_h(s)|| + \int_0^t|||e^c_\mathrm{I}(s)|||_\mathrm{DG} ||e^c_h(s)||_\mathrm{DG} + \int_0^t ||e^c_\mathrm{I}(s)||\,||e^c_h(s)||,
\end{equation*}

By application of H\"{o}lder's inequality and of Gr\"onwall's lemma \cite{quarteroni:EDP}, we obtain:
\begin{equation*}
    |||e^c_h(t)|||^2_\epsilon \lesssim \int_0^t||\dot{e}^c_\mathrm{I}(s)||^2 + \int_0^t|||e^c_\mathrm{I}(s)|||_\mathrm{DG}^2,
\end{equation*}
and by using the interpolation bounds of Proposition 3, we find:
\begin{equation}
|||e^c_h(t)|||^2_\epsilon \lesssim\sum_{K\in\partition} h_K^{2\min\{p+1,n\}-2} 
\int_0^t \left(||\dot{c}(s)||^2_{H^n(K)}+ ||c(s)||^2_{H^n(K)}\right).
\label{eq:resproof}
\end{equation}
Finally, we use the triangular inequality to estimate the discretization error.
\begin{equation*}
    |||c-c_h|||_\epsilon^2 \leq  |||e^c_h|||_\epsilon^2 + |||e^c_\mathrm{I}|||_\epsilon^2
\end{equation*}
The thesis follows by applying the result in Equation \eqref{eq:resproof} and the interpolation error.
\end{proof}
\begin{remark}
    So far, our analysis was based on the assumption of time-independent physical parameters $\alpha=\alpha(\boldsymbol{x})$ and $\mathbf{D}=\mathbf{D}(\boldsymbol{x})$. The results however remain valid also in the case of time-dependent parameters assuming sufficient regularity on time.
\end{remark}
\begin{remark}
    The theoretical analysis proposed in this work is specifically constructed for the FK equation. Indeed, some steps cannot be extended to different types of nonlinear reaction terms. For generalized results on general semilinear parabolic problems we refer to \cite{suli:parabolic}.
\end{remark}
\begin{remark}
The extensions to the non-homogeneous Dirichlet/Neumann boundary conditions can be proved by assuming sufficient regularity on the data, as in the linear case \cite{cangiani_hp-version_2016}. The results can be also extended to Robin boundary conditions, as in the linear case, changing the formulation and the proof as in \cite{riviereDiscontinuousGalerkinMethods2008}. Concerning Theorem~2, the extension of the theoretical result under different boundary conditions does not allow the use of Equation \eqref{eq:linfbound} in the continuous setting. It requires assuming a continuous weak solution $c\in L^{\infty}(\Omega)$ by taking care of defining the value $M$, such that $\|c\|_{L^\infty(\Omega)}\leq M$, and modifying the proof accordingly.
\end{remark}

\section{Fully-discrete formulation}
\label{sec:temporaldisc}
Let $(\boldsymbol\varphi_j)_{j=0}^{N_c}$ be a suitable basis for $\Wh$, then $c_h(t) = \displaystyle\sum_{j=0}^{N_c} C_n(t)\varphi_j,$ and denote by $\boldsymbol{C}\in\mathbb{R}^{N_c}$ the corresponding vector of the expansion coefficients, in the chosen basis. We define the matrices:
\begin{equation*}
[\mathrm{M}]_{ij} = (\varphi_j, \varphi_i)_\Omega\quad\mathrm{(Mass\;matrix)} \qquad 
[\mathrm{A}]_{ij} = \mathcal{A}(\varphi_j, \varphi_i)\quad\mathrm{(Stiffness\;matrix)}\qquad I,j = 1,...,N_c
\end{equation*}
\smallskip
\begin{equation*}
[\mathrm{M}_\alpha]_{ij} = (\alpha \varphi_j, \varphi_i)_\Omega\quad\mathrm{(Linear\;reaction\;matrix)} \qquad
[\widetilde{\mathrm{M}}_\alpha(\boldsymbol{C}(t)) ]_{ij}= (\alpha c_h(t)\varphi_j, \varphi_i)_\Omega\quad\mathrm{(Nonlinear\;reaction\;matrix)}
\end{equation*}
Moreover, we define the forcing term: $[\boldsymbol{F}]_{j} = F(\varphi_j)$ for $j = 1,...,N_c$. By exploiting all these definitions, we rewrite the problem \eqref{eq:DGFormulation} in algebraic form:
\begin{equation}
\label{eq:algfull}
    \begin{dcases}
         \mathrm{M}\dot{\boldsymbol{C}}(t)+\mathrm{A}\boldsymbol{C}(t)-\mathrm{M}_\alpha\boldsymbol{C}(t) + \widetilde{\mathrm{M}}_\alpha (\boldsymbol{C}(t))\boldsymbol{C}(t) = \boldsymbol{F}(t), & t\in(0,T) \\[4pt]
         \boldsymbol{C}(0) = \boldsymbol{C}_0 
    \end{dcases}
\end{equation}
\par
Let now construct a time discretization of the interval $[0,T]$ by defining a partition of $N$ intervals $0=t_0<t_1<...<t_N=T$. We assume a constant timestep $\Delta t = t_{n+1}-t_n$, $n=0,...,N-1$. We construct the fully discrete approximation by means of the Crank-Nicolson method. Given $\boldsymbol{C}^{0}=\boldsymbol{C}(0)$, find $\boldsymbol{C}^{n+1}\simeq \boldsymbol{C}(t_{n+1})$ for $n=0,...,N-1$:
\begin{equation}
\label{eq:fullyalgebraicproblem}
\mathrm{M}\boldsymbol{C}^{n+1}+\dfrac{\Delta t}{2}\left(\mathrm{K}-\mathrm{M}_\alpha\right)\boldsymbol{C}^{n+1} + \Delta t \widetilde{\mathrm{M}}_\alpha^{1/2} (\boldsymbol{C}^*)\boldsymbol{C}^{n+1,n} = \mathrm{M}\boldsymbol{C}^{n}-\dfrac{\Delta t}{2}\left(\mathrm{K}-\mathrm{M}_\alpha \right)\boldsymbol{C}^{n}+ \dfrac{1}{2}\left(\boldsymbol{F}^{n+1} + \boldsymbol{F}^n\right).
\end{equation}
For the nonlinear term we will consider either:
\begin{itemize}
    \item Semi-implicit treatment, i.e.:
    \begin{equation}
        \tilde{\mathrm{M}}_\alpha^{1/2} \left(\frac{3}{2}\boldsymbol{C}^n-\frac{1}{2}\boldsymbol{C}^{n-1}\right)\;\dfrac{\boldsymbol{C}^{n+1}+\boldsymbol{C}^{n}}{2},
    \end{equation}
\item Implicit treatment, i.e.:
    \begin{equation}
        \tilde{\mathrm{M}}_\alpha^{1/2} \left(\frac{1}{2}\boldsymbol{C}^{n+1}+\frac{1}{2}\boldsymbol{C}^{n}\right)\;\dfrac{\boldsymbol{C}^{n+1}+\boldsymbol{C}^{n}}{2}.
    \end{equation}
\end{itemize}

\section{Numerical results}
\label{sec:numericalresults}
In this section, we aim at verifying the accuracy of the method and the theoretical bounds of Section \ref{sec:error}. Throughout the section we choose the penalty parameter $\eta_0=10$.

\subsection{Test case 1: convergence analysis in a 2D case}
\begin{figure}[t!]
    \begin{subfigure}[b]{0.5\textwidth}
          \resizebox{\textwidth}{!}{\definecolor{mycolor2}{rgb}{0.00000,1.00000,1.00000}%
\pgfplotsset{
  log x ticks with fixed point/.style={
      xticklabel={
        \pgfkeys{/pgf/fpu=true}
        \pgfmathparse{exp(\tick)}%
        \pgfmathprintnumber[fixed  zerofill, precision=2]{\pgfmathresult}
        \pgfkeys{/pgf/fpu=false}
      }
  }
}
\begin{tikzpicture}

\begin{axis}[%
width=3.875in,
height=2.36in,
at={(2.6in,1.099in)},
scale only axis,
xmode=log,
xmin=0.064,
xmax=0.3239,
xminorticks=true,
xlabel = {$h$ [-]},
ylabel = {$||c(T)-c_h(T)||_\varepsilon$},
ymode=log,
ymin=1e-12,
ymax=0.01,
yminorticks=true,
axis background/.style={fill=white},
title style={font=\bfseries},
title={Semi-implicit time-discretization scheme},
xmajorgrids,
xminorgrids,
ymajorgrids,
yminorgrids,
legend style={legend cell align=left, align=left, draw=white!15!black}
]

\addplot [color=red, line width=2.0pt]
  table[row sep=crcr]{%
0.326976567914328  0.005602751537762\\
0.182774579337139  0.002306018088957\\
0.109324383591326  0.001299928383272\\
0.064564291840915  0.000572541903204\\
};
\addlegendentry{$p=1$}

\addplot [color=orange, line width=2.0pt]
  table[row sep=crcr]{%
0.328217726958287	0.003664756267069\\
0.188895143477632	0.001117233947070\\
0.108839088345096	3.70577524791e-04\\
0.064974829517436	0.000112266948587\\
};
\addlegendentry{$p=2$}

\addplot [color=green, line width=2.0pt]
  table[row sep=crcr]{%
0.324901291854012	4.512504331807238e-04\\
0.191027304794974   6.629507050594746e-05\\
0.112174284034378	1.158680994896928e-05\\
0.062496397121138	0.001858866712376e-03\\
};
\addlegendentry{$p=3$}

\addplot [color=mycolor2, line width=2.0pt]
  table[row sep=crcr]{%
0.3255	4.4357e-05\\
0.1816	3.7904e-06\\
0.1085	3.8867e-07\\
0.0618	3.4081e-08\\
};
\addlegendentry{$p=4$}

\addplot [color=blue, line width=2.0pt]
  table[row sep=crcr]{%
0.3255	2.2868e-06\\
0.1816	9.5398e-08\\
0.1085	5.3892e-09\\
0.0618  2.5799e-10\\
};
\addlegendentry{$p=5$}

\addplot [color=purple, line width=2.0pt]
  table[row sep=crcr]{%
0.319866273494550	1.223910310572353e-07\\
0.191962900117614	3.267462944072783e-09\\
0.108354266764863	1.069354535214168e-10\\
0.0639	            5.4727e-12\\
};
\addlegendentry{$p=6$}

\node[right, align=left, text=black, font=\footnotesize]
at (axis cs:0.1005,0.00075) {$1$};

\addplot [color=black, line width=1.5pt]
  table[row sep=crcr]{%
0.100   0.0008\\
0.075   0.0006\\
0.100   0.0006\\
0.100   0.0008\\
};

\node[right, align=left, text=black, font=\footnotesize]
at (axis cs:0.1005,0.0002) {$2$};

\addplot [color=black, line width=1.5pt]
  table[row sep=crcr]{%
0.100   0.0002\\
0.075   0.0001125\\
0.100   0.0001125\\
0.100   0.0002\\
};

\node[right, align=left, text=black, font=\footnotesize]
at (axis cs:0.1005,4e-6) {$3$};

\addplot [color=black, line width=1.5pt]
  table[row sep=crcr]{%
0.100   6e-06\\
0.075   2.53e-06\\
0.100   2.53e-06\\
0.100   6e-06\\
};

\node[right, align=left, text=black, font=\footnotesize]
at (axis cs:0.1005,1e-7) {$4$};

\addplot [color=black, line width=1.5pt]
  table[row sep=crcr]{%
0.100   2e-07\\
0.075   6.328e-08\\
0.100   6.328e-08\\
0.100   2e-07\\
};

\node[right, align=left, text=black, font=\footnotesize]
at (axis cs:0.1005,1e-9) {$5$};

\addplot [color=black, line width=1.5pt]
  table[row sep=crcr]{%
0.100   2e-09\\
0.075   4.74e-10\\
0.100   4.74e-10\\
0.100   2e-09\\
};

\node[right, align=left, text=black, font=\footnotesize]
at (axis cs:0.1005,2e-11) {$6$};

\addplot [color=black, line width=1.5pt]
  table[row sep=crcr]{%
0.100   5e-11\\
0.075   8.8989e-12\\
0.100   8.8989e-12\\
0.100   5e-11\\
};

\end{axis}
\end{tikzpicture}
    \end{subfigure}%
    \begin{subfigure}[b]{0.5\textwidth}
        \resizebox{\textwidth}{!}{\definecolor{mycolor2}{rgb}{0.00000,1.00000,1.00000}%
\pgfplotsset{
  log x ticks with fixed point/.style={
      xticklabel={
        \pgfkeys{/pgf/fpu=true}
        \pgfmathparse{exp(\tick)}%
        \pgfmathprintnumber[fixed  zerofill, precision=2]{\pgfmathresult}
        \pgfkeys{/pgf/fpu=false}
      }
  }
}
\begin{tikzpicture}

\begin{axis}[%
width=3.875in,
height=2.36in,
at={(2.6in,1.099in)},
scale only axis,
xmode=log,
xmin=0.064,
xmax=0.3239,
xminorticks=true,
xlabel = {$h$ [-]},
ylabel = {$||c(T)-c_h(T)||_\varepsilon$},
ymode=log,
ymin=1e-12,
ymax=0.01,
yminorticks=true,
axis background/.style={fill=white},
title style={font=\bfseries},
title={Implicit time-discretization scheme},
xmajorgrids,
xminorgrids,
ymajorgrids,
yminorgrids,
legend style={legend cell align=left, align=left, draw=white!15!black}
]
              
\addplot [color=red, line width=2.0pt]
  table[row sep=crcr]{%
0.341382280968322  0.005624830953481\\
0.190172648607784  0.002390432407803\\
0.108957016308869  0.001170357213753\\
0.064281923221838  0.000568483703791\\
};

\addplot [color=orange, line width=2.0pt]
  table[row sep=crcr]{%
0.347443616352793   0.003698872140207\\
0.182273186961405	0.001047846889660\\
0.109135957842343	3.59999188428e-04\\
0.064974829517436	1.1174e-04\\
};

\addplot [color=green, line width=2.0pt]
  table[row sep=crcr]{%
0.335716266752722	4.591700912889362e-04\\
0.178599013975622   6.564654434689503e-05\\
0.111756783886366	1.171957579094479e-05\\
0.0639	            1.9103e-06\\
};

\addplot [color=mycolor2, line width=2.0pt]
  table[row sep=crcr]{%
0.321622255288620   4.446612930731537e-05\\
0.183859192450264	3.835283934756732e-06\\
0.111170933166923	3.813209704260621e-07\\
0.062917536881006	3.159627779690000e-08\\
};

\addplot [color=blue, line width=2.0pt]
  table[row sep=crcr]{%
0.3255	2.2868e-06\\
0.1816	9.5398e-08\\
0.1085	5.3892e-09\\
0.0618	2.5799e-10\\
};

\addplot [color=purple, line width=2.0pt]
  table[row sep=crcr]{%
0.3255	1.3519e-07\\
0.1816	3.3333e-09\\
0.1085	1.1063e-10\\
0.0639	5.6243e-12\\
};

\node[right, align=left, text=black, font=\footnotesize]
at (axis cs:0.1005,0.00075) {$1$};

\addplot [color=black, line width=1.5pt]
  table[row sep=crcr]{%
0.100   0.0008\\
0.075   0.0006\\
0.100   0.0006\\
0.100   0.0008\\
};

\node[right, align=left, text=black, font=\footnotesize]
at (axis cs:0.1005,0.0002) {$2$};

\addplot [color=black, line width=1.5pt]
  table[row sep=crcr]{%
0.100   0.0002\\
0.075   0.0001125\\
0.100   0.0001125\\
0.100   0.0002\\
};

\node[right, align=left, text=black, font=\footnotesize]
at (axis cs:0.1005,4e-6) {$3$};

\addplot [color=black, line width=1.5pt]
  table[row sep=crcr]{%
0.100   6e-06\\
0.075   2.53e-06\\
0.100   2.53e-06\\
0.100   6e-06\\
};

\node[right, align=left, text=black, font=\footnotesize]
at (axis cs:0.1005,1e-7) {$4$};

\addplot [color=black, line width=1.5pt]
  table[row sep=crcr]{%
0.100   2e-07\\
0.075   6.328e-08\\
0.100   6.328e-08\\
0.100   2e-07\\
};

\node[right, align=left, text=black, font=\footnotesize]
at (axis cs:0.1005,1e-9) {$5$};

\addplot [color=black, line width=1.5pt]
  table[row sep=crcr]{%
0.100   2e-09\\
0.075   4.74e-10\\
0.100   4.74e-10\\
0.100   2e-09\\
};

\node[right, align=left, text=black, font=\footnotesize]
at (axis cs:0.1005,2e-11) {$6$};

\addplot [color=black, line width=1.5pt]
  table[row sep=crcr]{%
0.100   5e-11\\
0.075   8.8989e-12\\
0.100   8.8989e-12\\
0.100   5e-11\\
};

\end{axis}
\end{tikzpicture}
    \end{subfigure}%
    \caption{Test case 1: computed errors and convergence rates with either semi-implicit (left) and implicit (right) treatment of the nonlinear term.}
    \label{fig:errors2D}
\end{figure}
\begin{figure}
    \begin{subfigure}[b]{0.5\textwidth}
          \resizebox{\textwidth}{!}{\definecolor{mycolor1}{rgb}{1.00000,1.00000,0.00000}%
\definecolor{mycolor2}{rgb}{0.00000,1.00000,1.00000}%

\begin{tikzpicture}
\begin{axis}[%
width=3.875in,
height=2in,
at={(1.733in,0.687in)},
scale only axis,
xmin=1,
xmax=8,
xlabel style={font=\color{white!15!black}},
xlabel={$p$},
ymode=log,
ymin=1e-10,
ymax=0.01,
yminorticks=true,
ylabel style={font=\color{white!15!black}},
ylabel={Error},
axis background/.style={fill=white},
title={Semi-implicit treatment of the nonlinear term},
xmajorgrids,
ymajorgrids,
yminorgrids,
legend style={legend cell align=left, align=left, draw=white!15!black}
]

\addplot [color=magenta, line width=2.0pt]
  table[row sep=crcr]{%
1	5.60e-03\\
2	3.60e-03\\
3	4.50e-04\\
4	4.40e-05\\
5   2.28e-06\\
6	1.22e-07\\
7   5.41e-09\\
8   2.27e-10\\
};
\addlegendentry{$||c(T)-c_h(T)||_\varepsilon$}

\end{axis}
\end{tikzpicture}
    \end{subfigure}%
    \begin{subfigure}[b]{0.5\textwidth}
        \resizebox{\textwidth}{!}{\definecolor{mycolor1}{rgb}{1.00000,1.00000,0.00000}%
\definecolor{mycolor2}{rgb}{0.00000,1.00000,1.00000}%

\begin{tikzpicture}
\begin{axis}[%
width=3.875in,
height=2in,
at={(1.733in,0.687in)},
scale only axis,
xmin=1,
xmax=8,
xlabel style={font=\color{white!15!black}},
xlabel={$p$},
ymode=log,
ymin=1e-10,
ymax=0.01,
yminorticks=true,
ylabel style={font=\color{white!15!black}},
ylabel={Error},
axis background/.style={fill=white},
title={Implicit treatment of the nonlinear term},
xmajorgrids,
ymajorgrids,
yminorgrids,
legend style={legend cell align=left, align=left, draw=white!15!black}
]

\addplot [color=cyan, line width=2.0pt]
  table[row sep=crcr]{%
1	5.60e-03\\
2	3.50e-03\\
3	4.90e-04\\
4	4.44e-05\\
5   2.28e-06\\
6	1.35e-07\\
7   5.04e-09\\
8   3.41e-10\\
};
\addlegendentry{$||c(T)-c_h(T)||_\varepsilon$}

\end{axis}
\end{tikzpicture}
    \end{subfigure}%
    \caption{Test case 1: computed errors and convergence rates with either semi-implicit (left) and implicit (right) treatment of the nonlinear term.}
    \label{fig:errors2Dpoly}
\end{figure}
For the numerical tests in this section, we use a MATLAB code to solve the FK equation on polygonal meshes. We use a square domain $\Omega=(0,1)^2$, where we construct a mesh by using PolyMesher \cite{Polymesher}. Concerning the time discretization, we use a timestep $\Delta t = 10^{-5}$ and a maximum time $T=10^{-3}$. We consider the following manufactured exact solution:
\begin{equation}
    c(x,y,t)=\left(\cos(\pi x)\cos(\pi y)+2\right) e^{-t}.
\end{equation}
A fundamental simplification in this section is the isotropic diffusion tensor $\mathbf{D}=d_\mathrm{ext}\mathbf{I}$. We analyse the case with $d_\mathrm{ext}=1$ and $\alpha=1$. The forcing term and the Dirichlet boundary conditions are derived accordingly. 
\par
In Figure \ref{fig:errors2D}, we report the computed errors in the energy norm defined in Equation \eqref{eq:energynorm} at the final time $T=10^{-3}$. We performed the convergence test keeping fixed the polynomial order of the space approximation $p=1,...,6$ and using different mesh refinements $(N_\mathrm{el}= 30,100,300,1000)$. We observe that the theoretical rates of convergence are achieved for all the polynomial degrees $p$; indeed, the rate of convergence equals the degree of approximation, as proved in Theorem 2. 
\par
In Figure \ref{fig:errors2D}, we compare also the errors in the two different choices of treatment of the nonlinear term: the semi-implicit (left) and the implicit one (right). In the implicit case, the resulting nonlinear problem is solved by means of Picard iterations with tollerace $10^{-10}$. In this test case, we cannot notice any substantial difference concerning the resulting errors and the two methods reach the same precision for all the tested values of $p$.
\par
A convergence analysis with respect to the polynomial order $p$ is also performed with a mesh of 30 elements. The results are reported in Figure \ref{fig:errors2Dpoly}, where we observe exponential convergence. We point out that this case is not covered by our theoretical analysis, nevertheless, we demonstrate numerically that optimal convergence is observed. Also in this case we cannot notice any difference in the choice of the nonlinear treatment.

\subsection{Test case 2: Travelling waves in 2D}
In this section, we use the PolyDG formulation to simulate the travelling-wave solution of the FK equation in 2D:
\begin{equation}
    c(x,y,t) = \psi(x-vt) = \psi(\xi)
    \label{eq:wave}
\end{equation}
By plugging Equation \eqref{eq:wave} into Equation \eqref{eq:fk_strong}, with $f=0$ we obtain an equivalent system of ordinary differential equations:
\begin{equation}
    \begin{dcases}
        \chi'(\xi) = -\dfrac{v}{d_\mathrm{ext}}\chi(\xi) + \dfrac{1}{d_\mathrm{ext}}\psi(\xi)(\psi(\xi)-1) & \xi\in(0,T), \\
        \psi'(\xi) = \chi(\xi) & \xi\in(0,T), \\
    \end{dcases}
    \label{eq:sysode}
\end{equation}
where we use the assumption of isotropic diffusion tensor $\mathbf{D}=d_\mathrm{ext}\mathbf{I}$ and $d_\mathrm{axn}=0$. In particular, we fix $d_\mathrm{ext}=10^{-3}$, $\alpha=1$ and $\eta_0=10$. Concerning the wave's parameters we consider a speed $v=0.1$ and the initial data $\psi(0)=1$ and $\chi(0)=-10^{-2}$. The domain is constructed as a rectangle $\Omega=(0, 5)\times(0, 1)$ and we consider two final times $T=5$ and $T=10$. For the implicit treatment of the nonlinear terms, we adopt Picard iterations with an absolute tolerance $10^{-10}$ and a maximum number of iterations fixed to $20$. The exact solution of this test case is not known in closed form \cite{bonizzoni_structure-preserving_2020}. The reference solution is computed by solving Equation \eqref{eq:sysode} with the MATLAB solver \texttt{ode45}.

\begin{table}[t]
    \centering
    \begin{tabular}{C|c|C|C|C|C}
    \hline
    \multicolumn{5}{c}{$p=2 \qquad \Delta t= 0.01$} \\
    \hline
    \hline
    \multicolumn{2}{c|}{\textbf{Method}} 
    & \multicolumn{2}{c|}{\textbf{Semi-Implicit}} 
    & \multicolumn{2}{c}{\textbf{Implicit}}
    \\    \hline
    \textbf{Refinement} & \textbf{DOFs}
    & $T = 5$ 
    & $T = 10$ 
    & $T = 5$ 
    & $T = 10$
    \\ \hline
    $N_\mathrm{el} = 30$ & $180$
    & $6.33\times10^{3}$ & $1.03\times10^{4}$
    & $1.63\times10^{0}$ & $5.36\times10^{6}$
    \\ \hline
    $N_\mathrm{el} = 100$ & $600$
    & $1.45\times10^{2}$ & $1.12\times10^{4}$
    & $8.24\times10^{-1}$ & $2.18\times10^{7}$
    \\ \hline    
    $N_\mathrm{el} = 300$ & $1800$
    & $1.98\times10^{-1}$ & $6.02\times10^{4}$
    & $6.97\times10^{-2}$ & $1.28\times10^{8}$
    \\ \hline
    \end{tabular}

    \begin{tabular}{C|c|C|C|C|C}
    \hline
    \multicolumn{5}{c}{$p=3 \qquad \Delta t= 0.01$} \\
    \hline
    \hline 
    \multicolumn{2}{c|}{\textbf{Method}}
    & \multicolumn{2}{c|}{\textbf{Semi-Implicit}} 
    & \multicolumn{2}{c}{\textbf{Implicit}}
    \\    \hline
    \textbf{Refinement} & \textbf{DOFs}
    & $T = 5$ 
    & $T = 10$ 
    & $T = 5$ 
    & $T = 10$
    \\ \hline
    $N_\mathrm{el} = 30$ & $300$
    & $9.27\times10^{-1}$ & $6.75\times10^{4}$
    & $1.56\times10^{-1}$ & $7.34\times10^{7}$
    \\ \hline
    $N_\mathrm{el} = 100$ & $1000$
    & $5.50\times10^{-2}$ & $7.20\times10^{-1}$
    & $8.12\times10^{-3}$ & $1.78\times10^{-1}$
    \\ \hline    
    $N_\mathrm{el} = 300$ & $3000$
    & $6.35\times10^{-4}$ & $7.80\times10^{-3}$
    & $7.71\times10^{-4}$ & $2.12\times10^{-3}$
    \\ \hline
    \end{tabular}
    \caption{Computed errors in the $L^2-$norm at the final time with different mesh refinements, $\Delta t = 0.01$: $p=2$ (top) and $p=3$ (bottom).}
    \label{tab:errorsgridref}
\end{table}
\begin{figure}[t]
	\centering
	{\includegraphics[width=\textwidth]{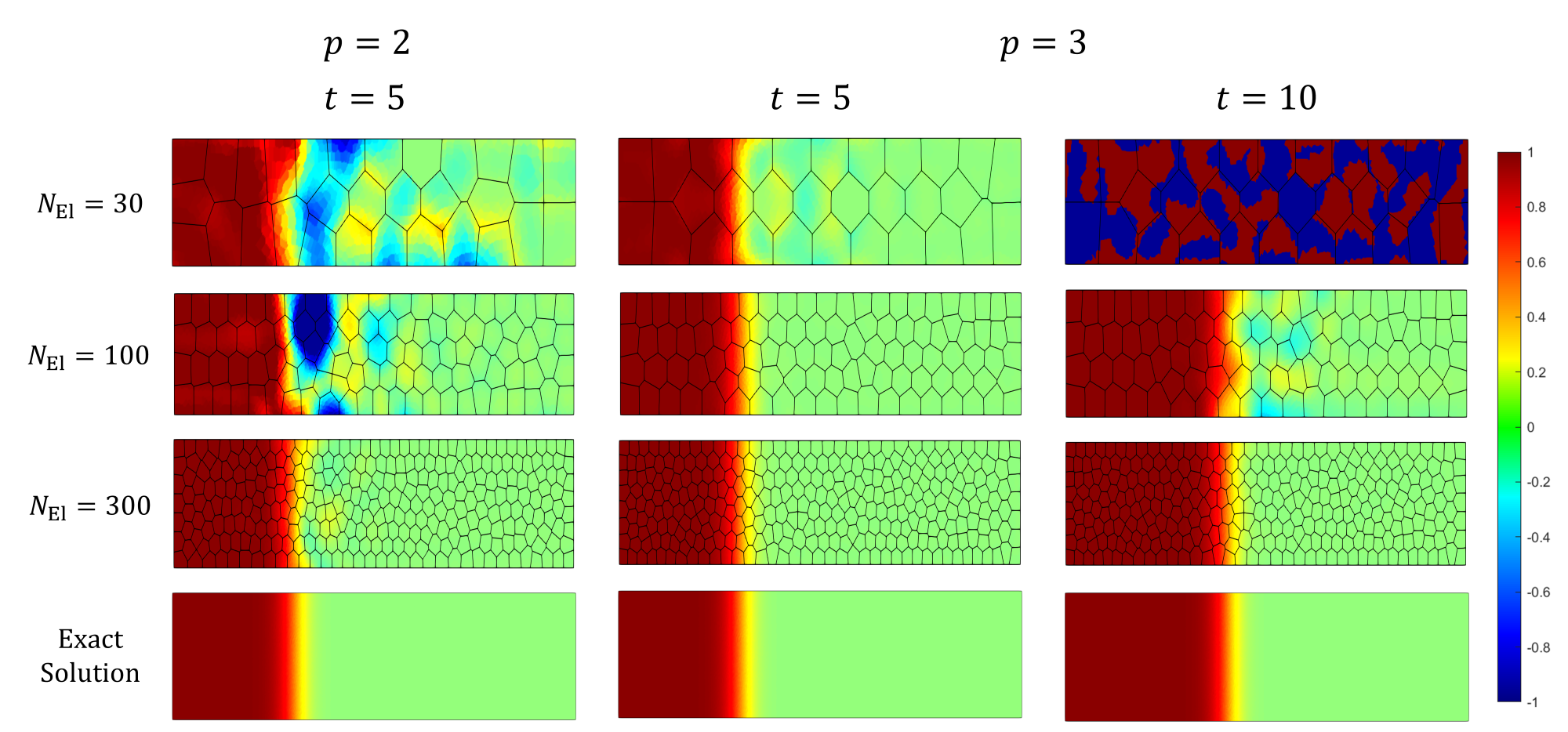}}
	\caption{Snapshot of the exact (last row) and computed (first two rows) solutions with different values of mesh refinement and semi-implicit solver. A correct approximation of the wave propagation velocity can be observed by comparing the last two rows.}
	\label{fig:waves2D}
\end{figure}
\begin{table}[t]
    \centering
    \begin{tabular}{C|c|C|C|C|C}
    \hline
    \multicolumn{5}{c}{$\Delta t = 0.01$} \\
    \hline
    \hline
    \multicolumn{2}{c|}{\textbf{Method}}
    & \multicolumn{2}{c|}{\textbf{Semi-Implicit}} 
    & \multicolumn{2}{c}{\textbf{Implicit}}
    \\    \hline
    \textbf{Order}  & \textbf{DOFs}
    & $T = 5$ 
    & $T = 10$ 
    & $T = 5$ 
    & $T = 10$
    \\ \hline
    $\boldsymbol{p=2}$ & $180$
    & $6.33\times10^{3}$ & $1.03\times10^{4}$
    & $1.63\times10^{0}$ & $5.36\times10^{6}$
    \\ \hline
    $\boldsymbol{p=3}$ & $300$
    & $9.27\times10^{-1}$ & $6.75\times10^{4}$
    & $1.56\times10^{-1}$ & $7.34\times10^{7}$
    \\ \hline    
    $\boldsymbol{p=4}$ & $450$
    & $1.80\times10^{-2}$ & $1.96\times10^{-1}$
    & $8.10\times10^{-3}$ & $9.98\times10^{-2}$
    \\ \hline    
    $\boldsymbol{p=5}$ & $630$
    & $1.40\times10^{-3}$ & $1.80\times10^{-2}$ 
    & $2.25\times10^{-3}$ & $4.54\times10^{-2}$ 
    \\ \hline
    \end{tabular}

    \begin{tabular}{C|c|C|C|C|C}
    \hline
    \multicolumn{5}{c}{$\Delta t = 0.005$} \\
    \hline
    \hline
    \multicolumn{2}{c|}{\textbf{Method}}
    & \multicolumn{2}{c|}{\textbf{Semi-Implicit}} 
    & \multicolumn{2}{c}{\textbf{Implicit}}
    \\    \hline
    \textbf{Order} & \textbf{DOFs}
    & $T = 5$ 
    & $T = 10$ 
    & $T = 5$ 
    & $T = 10$
    \\ \hline
    $\boldsymbol{p=2}$ & $180$
    & $1.34\times10^{0}$ & $3.32\times10^{4}$
    & $8.66\times10^{-1}$ & $6.06\times10^{6}$
    \\ \hline
    $\boldsymbol{p=3}$ & $300$
    & $1.28\times10^{-1}$ & $9.06\times10^{3}$
    & $1.02\times10^{-1}$ & $2.58\times10^{7}$
    \\ \hline     
    $\boldsymbol{p=4}$ & $450$
    & $8.50\times10^{-3}$ & $1.20\times10^{-1}$
    & $7.00\times10^{-3}$ & $1.56\times10^{-1}$
    \\ \hline    
    $\boldsymbol{p=5}$ & $630$
    & $1.40\times10^{-3}$ & $9.60\times10^{-3}$ 
    & $5.25\times10^{-4}$ & $2.31\times10^{-3}$ 
    \\ \hline
    \end{tabular}
    \caption{Computed errors in the $L^2-$norm at the final time with different mesh refinement $N_\mathrm{el}$: $\Delta t = 0.01$ (top) and $\Delta t = 0.005$ (bottom).}
    \label{tab:errorstimeref}
\end{table}

First, we try to address the effect of mesh refinement on the quality of the discrete solution. In Table \ref{tab:errorsgridref}, we report the computed errors in $L^2-$norm for the choice $\Delta t = 0.01$ at the two different time frames $T=5$ and $T=10$. 
\par
Concerning the polynomial degree $p=2$, we notice that the scheme provides a good approximation of the wavefront only considering sufficiently refined mesh. In these numerical experiments, we can observe that the use of Picard iterations (implicit treatment) allows for obtaining better error estimates. In Figure \ref{fig:waves2D}, we can observe the results with the semi-implicit discretization at time $t=5$. However, in semi-implicit and implicit cases we notice large errors at time $T=10$. The numerical solution is no longer a sufficiently accurate approximation of the exact one (see Table~\ref{tab:errorsgridref}). Indeed, due to the unstable nature of the equilibrium $c=0$ and to the fact that our method is not positivity-preserving, whenever the numerical solution becomes negative, the scheme is not able to correct approximate the solution, and it diverges to wrong unphysical approximations.
\par
A way to overcome the problem is to increase the polynomial order of the approximation. Indeed, by choosing $p=3$ the solution is accurately approximated at both $t=5$ and $t=10$, for sufficiently refined meshes (see Figure \ref{fig:waves2D}). In Table \ref{tab:errorsgridref}, we notice that the errors remain low also for $t=10$, excluding the case $N_\mathrm{el}=30$. In this case, we do not notice any advantage in the use of an implicit treatment over a semi-implicit one. 
\par
The second test case addresses the effect of the timestep choice on the quality of the solution. In Table \ref{tab:errorstimeref}, we report the computed errors in $L^2-$norm for the choice $N_\mathrm{el} = 30$ at two different snapshots. The first fact that can be noticed is that by reducing the timestep, we have a reduction in the $L^2$-error.
\par
In this test, we can notice the importance of using a high-order numerical scheme, which allows simulating the waves in an accurate way, also on coarse meshes. For example, by using $N_\mathrm{el} = 30$ and $p=5$ we are able to obtain a good approximation of the solution (630 DOFs), on the contrary, with $N_\mathrm{el} = 100$ and $p=2$ we obtain a worst result with a comparable number of DOFs (600). In Figure \ref{fig:ErrorsDOF} on the left, we plot the computed errors at final time $T=5$ versus DOFs in three different cases and with implicit treatment of the nonlinear term: $h$-refinement with fixed polynomial order $p=2,3$, and $p$-refinement with fixed mesh with $N_\mathrm{el}=30$. We can notice that using a higher polynomial order, we have lower errors with the same number of DOFs and use an $h$-refinement strategy. This is coherent with the literature findings about wave simulations \cite{antoniettibottimazzieriwaves}. The test does not evidence large differences in the use of an implicit solver, but this is in general more accurate than the semi-implicit one. However, the resolution with implicit nonlinear treatment requires performing Picard iterations at any timestep and so it requires a higher computational cost.
\par
In Figure \ref{fig:ErrorsDOF} on the right, we report the errors in the energy norm in three different cases associated with different polynomial orders $(p=2,3,4)$, versus time. For this test, we consider $N_\mathrm{el}=30$ and $\Delta t= 0.01$. From these results, it seems that for $p=3,4$ the error increase linearly with $T$, whereas for the case $p=2$, we can observe an exponential trend, after $T=4.5$, which is coherent to the result of Theorem 1.
\par
Finally, in this numerical test, we can observe that with a sufficiently refined mesh and a polynomial order which is large enough, we are able to accurately simulate the wave propagation. In particular, in Figure \ref{fig:waves2D} (last two lines), we can also notice that the velocity of the propagating front is correctly caught by our method. This analysis is fundamental to confirm the accuracy of our method for the prediction of the spreading of the protein concentrations inside the brain.

\begin{figure}
    \begin{subfigure}[b]{0.5\textwidth}
          \resizebox{\textwidth}{!}{\definecolor{mycolor2}{rgb}{0.00000,1.00000,1.00000}%
\pgfplotsset{
  log x ticks with fixed point/.style={
      xticklabel={
        \pgfkeys{/pgf/fpu=true}
        \pgfmathparse{exp(\tick)}%
        \pgfmathprintnumber[fixed  zerofill, precision=2]{\pgfmathresult}
        \pgfkeys{/pgf/fpu=false}
      }
  }
}
\begin{tikzpicture}

\begin{axis}[%
width=3.875in,
height=2in,
at={(1.733in,0.687in)},
scale only axis,
xmode=log,
xmin=100,
xmax=4000,
xminorticks=true,
xlabel = {DOFs [-]},
ylabel = {$||c(T)-c_h(T)||_\varepsilon$},
ymode=log,
ymin=1e-4,
ymax=2,
yminorticks=true,
axis background/.style={fill=white},
title style={font=\bfseries},
xmajorgrids,
xminorgrids,
ymajorgrids,
yminorgrids,
legend style={legend cell align=left, align=left, draw=white!15!black}
]

\addplot [color=orange, line width=2.0pt]
  table[row sep=crcr]{%
180  1.63\\
600  8.24e-1\\
1800 6.97e-2\\
};
\addlegendentry{$h$-refinement ($p=2$)}

\addplot [color=green, line width=2.0pt]
  table[row sep=crcr]{%
300   1.56e-1\\
1000  8.12e-3\\
3000  7.71e-4\\
};
\addlegendentry{$h$-refinement ($p=3$)}

\addplot [color=cyan, line width=2.0pt]
  table[row sep=crcr]{%
180  1.63\\
300  1.56e-1\\
450  8.10e-3\\
630  2.25e-3\\
840  6.4867e-4\\
1080 1.3969e-4\\
};
\addlegendentry{$p$-refinement ($N_\mathrm{el}=30$)}

\end{axis}
\end{tikzpicture}
    \end{subfigure}%
    \begin{subfigure}[b]{0.5\textwidth}
        \resizebox{\textwidth}{!}{\input{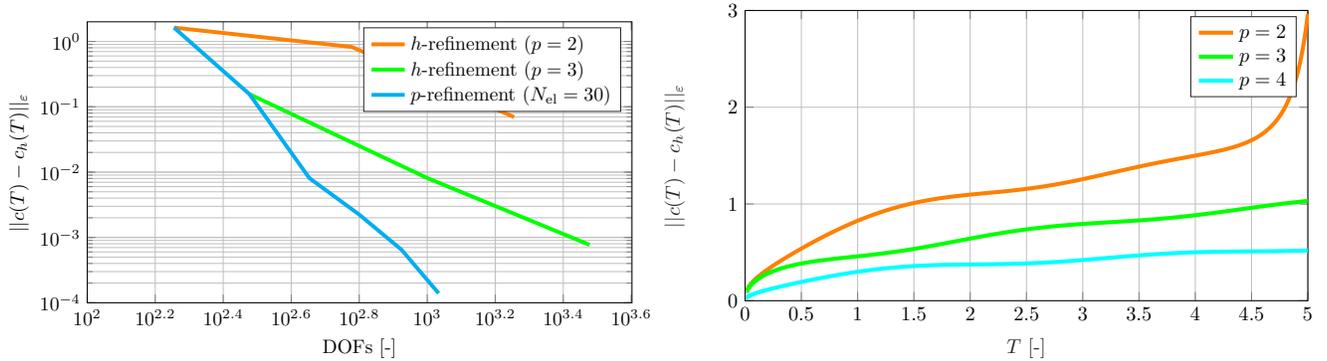}}
    \end{subfigure}%
    \caption{Test case 2: computed errors with respect to DOFs, in different cases of $h$-refinement $(p=2,3)$, and $p$-refinement $(N_\mathrm{el}=30)$ (left), and with respect to the time for $N_\mathrm{el}=30$ and $p=2,3,4$ (right).}
    \label{fig:ErrorsDOF}
\end{figure}

\subsection{Test case 3: Spreading of $\alpha$-synuclein in a 2D brain section}
In this section, we address a numerical simulation of the spreading of the $\alpha$-synuclein on a polygonal agglomerated grid. Starting from structural Magnetic Resonance Images (MRI) of a brain from the OASIS-3 database \cite{OASIS3} we segment the brain by means of Freesurfer \cite{Freesurfer}. After that, we construct a mesh of a slice of the brain along the sagittal plane by means of VMTK \cite{VMTK:antiga}.
\par
The triangular resulting mesh is composed of $41\,859$ triangles, as in Figure~\ref{fig:Meshfibres} (left). However, the generality of the PolyDG method allows us to use mesh elements of any shape, for this reason, we agglomerate the mesh by using ParMETIS \cite{Parmetis} and we obtain a polygonal mesh of 500 elements, as shown in Figure~\ref{fig:Meshfibres} (middle). The solution is computed by using a polynomial order of discretization $p=4$. With this approach, we can on one hand preserve the quality of the geometry description, save computational time (as the mesh is coarse), and exploit the advantage of using high-order approximation. Concerning the time integration we adopt a timestep $\Delta t = 0.01\,\mathrm{years}$.
\par
In order to construct the axonal component of the diffusion tensor $\mathbf{D}$, we derive the diffusion tensor from DTI medical images by using Freesurfer and Nibabel \cite{Nibabel}. By computing the principal eigenvector $\boldsymbol{n}$ of the imaging-derived tensor, we find the directions of the fibres in Figure~\ref{fig:Meshfibres} (right). In this way, we are able to compute the diffusion tensor as in Equation \eqref{eq:difftensor}. Concerning the parameters of the model, we choose the reaction velocity $\alpha = 0.9/\mathrm{year}$. Moreover, we impose an axonal diffusion, which is 10 times faster than the isotropic one: $d_\mathrm{ext} = 8\,\mathrm{mm}^2/\mathrm{year}$ and $d_\mathrm{axn} = 80\,\mathrm{mm}^2/\mathrm{year}$ \cite{schaferInterplayBiochemicalBiomechanical2019}. We fix $f=0$ and we impose homogeneous Neumann boundary conditions on $\partial \Omega$.
\begin{figure}[t]
	\centering
	{\includegraphics[width=0.9\textwidth]{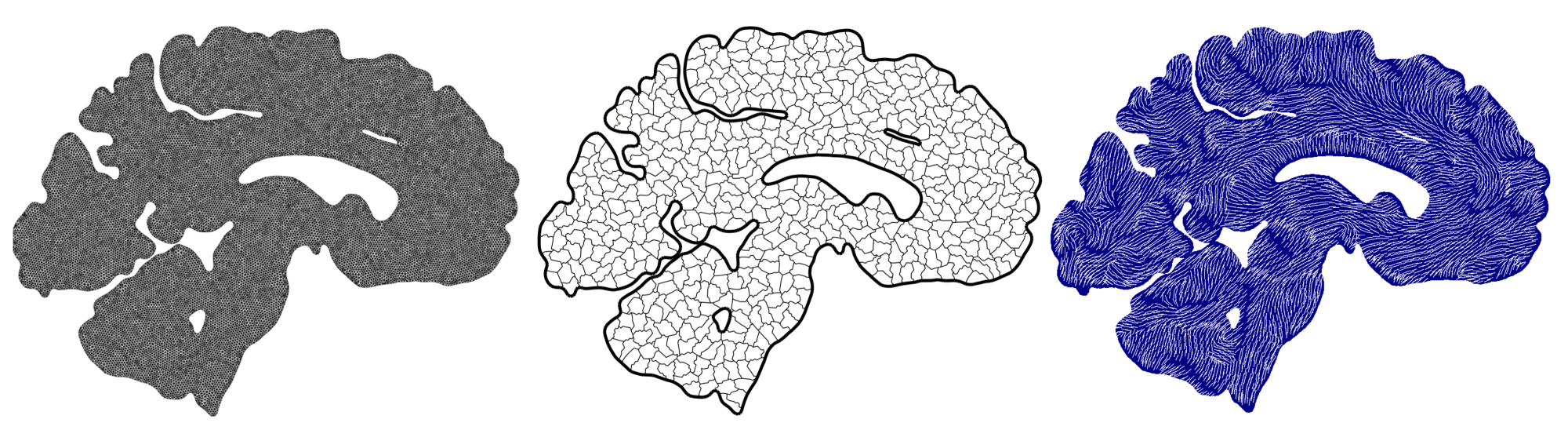}}
	\caption{Fine triangular mesh of a sagittal brain slice (left), agglomerated mesh from the triangular one (centre) and brain reconstructed fibres directions (right).}
	\label{fig:Meshfibres}
\end{figure}
\begin{figure}[t!]
	\centering
	{\includegraphics[width=0.9\textwidth]{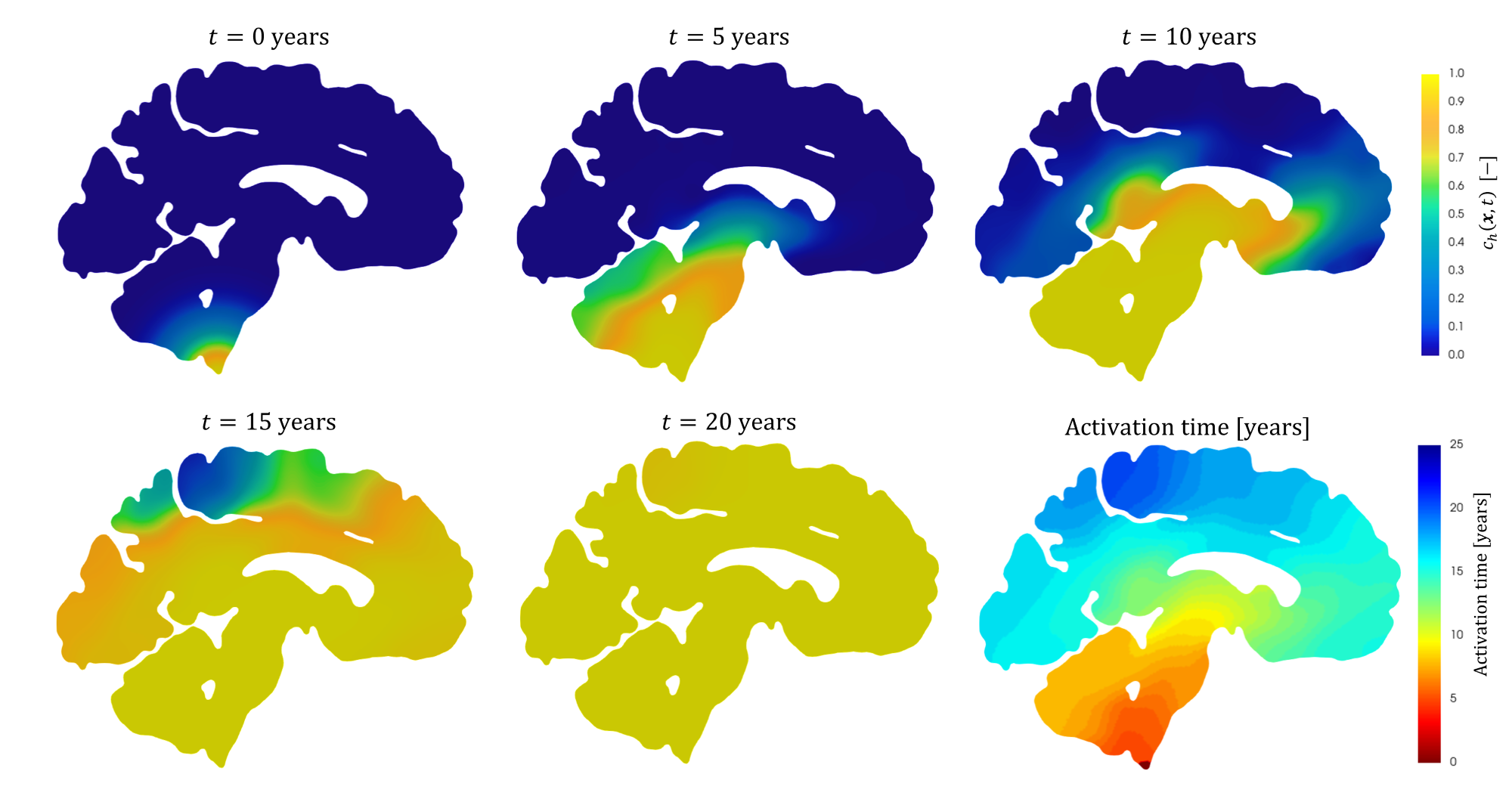}}
	\caption{Patterns of $\alpha$-synuclein concentration at different stages of the pathology and activation time of the pathology (bottom-right).} 
	\label{fig:Solution2DPark}
\end{figure}
\par
To simulate the $\alpha$-synuclein diffusion in Parkinson’s disease we generate an initial condition, with concentration initially located in the dorsal motor nucleus \cite{braakStagingBrainPathology2003}. In Figure \ref{fig:Solution2DPark}, we report both the initial condition (time $t=0$) and the solution at different time instants. We can notice that the diffusion directions are coherent with the medical literature \cite{braakStagingBrainPathology2003, Goedert2015}.
\par
Moreover, we compute the activation time of the pathology as:
\begin{equation}
\label{eq:acttime}
    \hat{t}(\boldsymbol{x},t) = \chi_{\{c_h(\boldsymbol{x},t)>c_\mathrm{crit}\}} (\boldsymbol{x},t) \qquad \boldsymbol{x}\in \Omega \quad t\in[0,T],
\end{equation}
where $\chi$ is the indicator function and $c_\mathrm{crit}$ is the critical value of the pathological protein concentration we fix to be equal to $c_\mathrm{crit}=0.95$. Indeed, a high concentration of misfolded proteins destroys the electric signal transport. This indicator gives us a measure of the time after which the neurons in a specific region will be affected by pathological communication. We report the activation time computed in Figure \ref{fig:Solution2DPark}. We can notice that the time of development of the pathology is of the order of 20 years, coherently with the medical literature \cite{braakStagingBrainPathology2003} and the result is qualitatively similar to other literature results \cite{weickenmeierPhysicsbasedModelExplains2019}.

\subsection{Test case 4: convergence analysis in a 3D case}
\begin{figure}[t!]
    \begin{subfigure}[b]{0.5\textwidth}
          \resizebox{\textwidth}{!}{\definecolor{mycolor2}{rgb}{0.00000,1.00000,1.00000}%
\pgfplotsset{
  log x ticks with fixed point/.style={
      xticklabel={
        \pgfkeys{/pgf/fpu=true}
        \pgfmathparse{exp(\tick)}%
        \pgfmathprintnumber[fixed  zerofill, precision=2]{\pgfmathresult}
        \pgfkeys{/pgf/fpu=false}
      }
  }
}
\begin{tikzpicture}

\begin{axis}[%
width=3.875in,
height=2.36in,
at={(2.6in,1.099in)},
scale only axis,
xmode=log,
xmin=0.108,
xmax=0.866,
xminorticks=true,
xlabel = {$h$ [-]},
ylabel = {$||c(T)-c_h(T)||_\varepsilon$},
ymode=log,
ymin=1e-10,
ymax=0.08,
yminorticks=true,
axis background/.style={fill=white},
title style={font=\bfseries},
xmajorgrids,
xminorgrids,
ymajorgrids,
yminorgrids,
legend style={legend cell align=left, align=left, draw=white!15!black}
]

\addplot [color=red, line width=2.0pt]
  table[row sep=crcr]{%
0.86600  0.06112354102066886\\
0.43300  0.03309810042662088\\
0.21750  0.01710591323524412\\
0.10875  0.00850411814692568\\
};
\addlegendentry{$p=1$}

\addplot [color=orange, line width=2.0pt]
  table[row sep=crcr]{%
0.86600  0.02140436934467019\\
0.43300  0.00523681026752531\\
0.21750  0.00129000089518889\\
0.10875  0.00031641697538347\\
};
\addlegendentry{$p=2$}

\addplot [color=green, line width=2.0pt]
  table[row sep=crcr]{%
0.86600  0.0056330008787462684\\
0.43300  0.0007955610973032714\\
0.21750  0.0001024381561146145\\
0.10875  1.265835939896724e-05\\
};
\addlegendentry{$p=3$}

\addplot [color=mycolor2, line width=2.0pt]
  table[row sep=crcr]{%
0.86600  0.0014241281178261264\\
0.43300  0.0001041160087877188\\
0.21750  6.850647293775494e-06\\
0.10875  4.308483828382257e-07\\
};
\addlegendentry{$p=4$}

\addplot [color=blue, line width=2.0pt]
  table[row sep=crcr]{%
0.86600  0.00032811226451142945\\
0.43300  1.2417694131114222e-05\\
0.21750  4.0288791852359760e-07\\
0.10875  1.2813662788547830e-08\\
};
\addlegendentry{$p=5$}

\addplot [color=purple, line width=2.0pt]
  table[row sep=crcr]{%
0.86600  6.8414514331671680e-05\\
0.43300  1.2743314548563232e-06\\
0.21750  2.1657841948050973e-08\\
0.10875  3.4315259127184004e-10\\
};
\addlegendentry{$p=6$}

\node[right, align=left, text=black, font=\footnotesize]
at (axis cs:0.2005,0.0075) {$1$};

\addplot [color=black, line width=1.5pt]
  table[row sep=crcr]{%
0.200   0.0100\\
0.150   0.0066\\
0.200   0.0066\\
0.200   0.0100\\
};

\node[right, align=left, text=black, font=\footnotesize]
at (axis cs:0.2005,0.0006) {$2$};

\addplot [color=black, line width=1.5pt]
  table[row sep=crcr]{%
0.200   0.000800\\
0.150   0.000452\\
0.200   0.000452\\
0.200   0.000800\\
};

\node[right, align=left, text=black, font=\footnotesize]
at (axis cs:0.2005,4e-5) {$3$};

\addplot [color=black, line width=1.5pt]
  table[row sep=crcr]{%
0.200   0.000060\\
0.150   0.000025\\
0.200   0.000025\\
0.200   0.000060\\
};

\node[right, align=left, text=black, font=\footnotesize]
at (axis cs:0.2005,2e-6) {$4$};

\addplot [color=black, line width=1.5pt]
  table[row sep=crcr]{%
0.200   4.00e-06\\
0.150   1.27e-06\\
0.200   1.27e-06\\
0.200   4.00e-06\\
};

\node[right, align=left, text=black, font=\footnotesize]
at (axis cs:0.2005,8e-8) {$5$};

\addplot [color=black, line width=1.5pt]
  table[row sep=crcr]{%
0.200   2.00e-07\\
0.150   4.80e-08\\
0.200   4.80e-08\\
0.200   2.00e-07\\
};

\node[right, align=left, text=black, font=\footnotesize]
at (axis cs:0.2005,4e-09) {$6$};

\addplot [color=black, line width=1.5pt]
  table[row sep=crcr]{%
0.200   7.00e-09\\
0.150   1.26e-09\\
0.200   1.26e-09\\
0.200   7.00e-09\\
};

\end{axis}
\end{tikzpicture}
    \end{subfigure}%
    \begin{subfigure}[b]{0.5\textwidth}
        \resizebox{\textwidth}{!}{\definecolor{mycolor1}{rgb}{1.00000,1.00000,0.00000}%
\definecolor{mycolor2}{rgb}{0.00000,1.00000,1.00000}%

\begin{tikzpicture}
\begin{axis}[%
width=3.875in,
height=2.36in,
at={(2.6in,1.099in)},
scale only axis,
xmin=1,
xmax=6,
xlabel style={font=\color{white!15!black}},
xlabel={$p$},
ymode=log,
ymin=1e-5,
ymax=0.08,
yminorticks=true,
ylabel style={font=\color{white!15!black}},
ylabel={Error},
axis background/.style={fill=white},
xmajorgrids,
ymajorgrids,
yminorgrids,
legend style={legend cell align=left, align=left, draw=white!15!black}
]

\addplot [color=cyan, line width=2.0pt]
  table[row sep=crcr]{%
1	0.06112354102066886\\
2	0.02140436934467019\\
3	0.00563300087874626\\
4	0.00142412811782612\\
5   0.00032811226451142\\
6	6.8414514331671e-05\\
};
\addlegendentry{$||c(T)-c_h(T)||_\varepsilon$}

\end{axis}
\end{tikzpicture}
    \end{subfigure}%
    \caption{Test case 4: computed errors and convergence rates with respect to $h$ (left) and the polynomial order $p$ (right).}
    \label{fig:errors3D}
\end{figure}
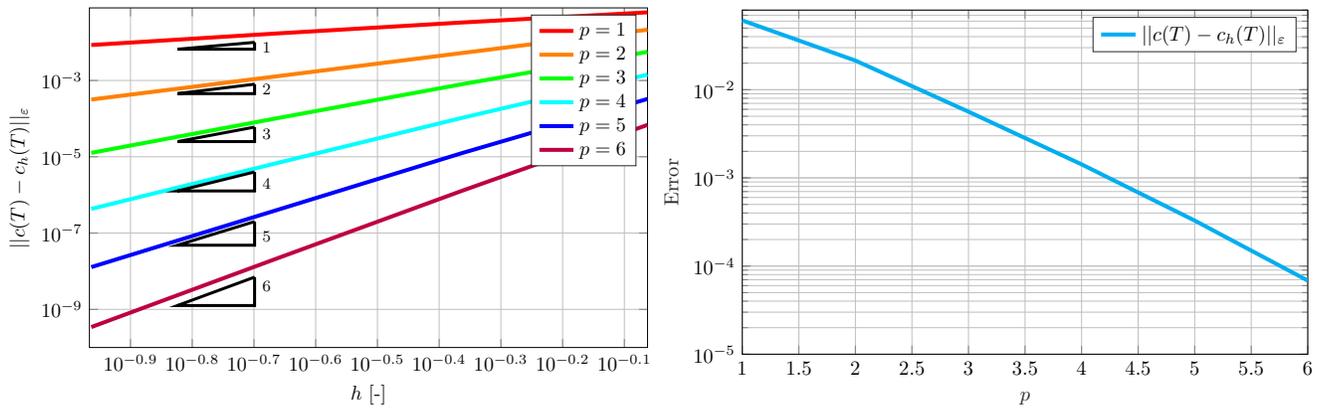
For the numerical tests in this section,  we use the FEniCS finite element software \cite{FenicsCode} (version 2019) to solve the FK equation on tetrahedral meshes. We use a cubic domain $\Omega=(0,1)^3$. Concerning the time discretization, we use a timestep $\Delta t = 10^{-5}$ and a maximum time $T=10^{-3}$. We consider the following manufactured exact solution:
\begin{equation}
    c(x,y,z,t)=\left(\cos(\pi x)\cos(\pi y)\cos(\pi z)\right) e^{-t}.
\end{equation}
In this section we adopt an isotropic diffusion tensor $\mathbf{D}=d_\mathrm{ext}\mathbf{I}$ and we fix the parameters $d_\mathrm{ext}=1$ and $\alpha=0.1$. The forcing term and the Dirichlet boundary condition imposed on $\partial \Omega$ are derived accordingly. The treatment of the nonlinear term in this section is semi-implicit.
\par
In Figure \ref{fig:errors3D}, we report the computed errors in the energy norm defined in Equation \eqref{eq:energynorm} at the final time $T=10^{-3}$. Firstly, we performed the convergence test keeping fixed the polynomial order of the space approximation $p=1,...,6$ and using different mesh refinements $(h=0.866,0.433,0.217,0.108)$. The theoretical rates of convergence are achieved for all the polynomial degrees $p$, coherently to what we proved in Theorem 2.
\par
A convergence analysis with respect to the polynomial order $p$ is also performed with a mesh with $h=0.866$. The results are reported in Figure \ref{fig:errors3D}, where we observe exponential convergence. As we mentioned in the results of the test case of Section 7.1, this case is not covered by our theoretical analysis, nevertheless, numerically we can observe an optimal convergence rate.

\subsection{Test case 5: Spreading of $\alpha$-synuclein in 3D brain}
\begin{figure}
	\centering
	{\includegraphics[width=\textwidth]{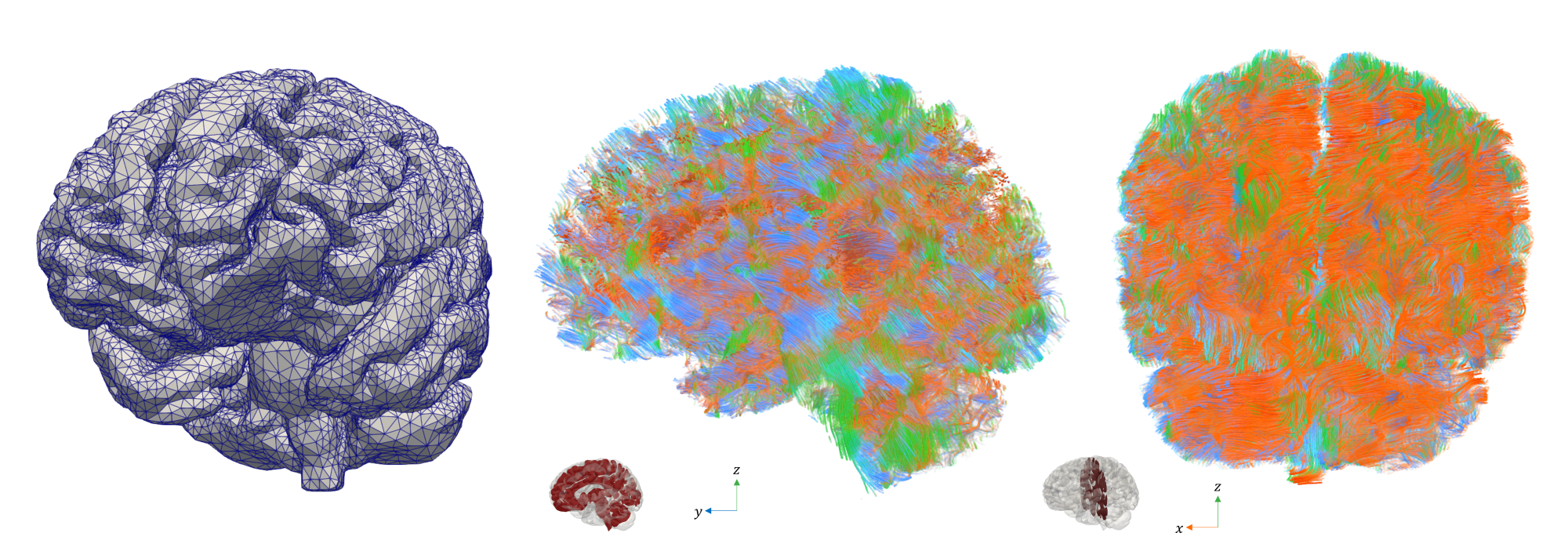}}
	\caption{Brain mesh (left), fibres view from the sagittal plane (centre) and fibres view from the coronal plane (right). In the visualization of the fibres, red indicates directions in the $x$-axis, blue indicates directions in the $y$-axis and green indicates directions in the $z$-axis.}
	\label{fig:Meshfibre3D}
\end{figure}

In this section, we present a numerical simulation of the spreading of the $\alpha$-synuclein on a three-dimensional tetrahedral grid; we use the FEniCS finite element software \cite{FenicsCode} (version 2019). Starting from structural Magnetic Resonance Images (MRI) of a brain from the OASIS-3 database \cite{OASIS3} we segment the brain by means of Freesurfer \cite{Freesurfer}. Finally, the mesh is constructed using the SVMTK library \cite{Mardal:Mesh}. The tetrahedral resulting mesh is composed of 142'658 elements. 
\par
\begin{figure}[t]
	\centering
	{\includegraphics[width=\textwidth]{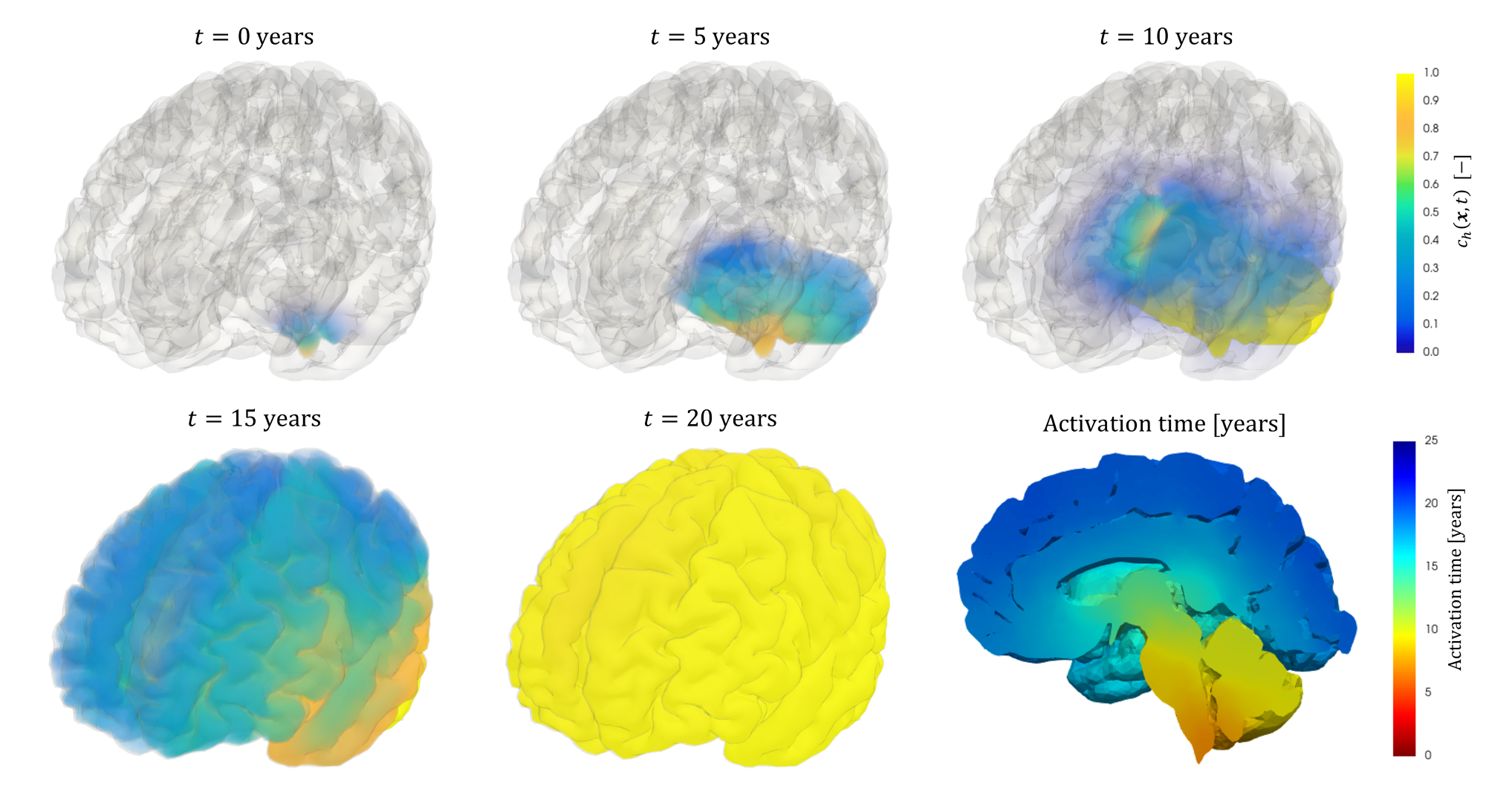}}
	\caption{Patterns of $\alpha$-synuclein concentration at different stages of the pathology with volume rendering and activation time of the pathology on inside the brain geometry (bottom-right).} 
	\label{fig:Solution3DPark}
\end{figure}
The axonal component of the diffusion tensor $\mathbf{D}$ is derived from the diffusion tensor from DTI medical images by using Freesurfer and Nibabel \cite{Nibabel}; the directions of the fibres are reported in Figure~\ref{fig:Meshfibre3D}.  Concerning the parameters of the model, we choose the reaction velocity $\alpha = 0.9/\mathrm{year}$. Moreover, we impose an axonal diffusion, which is 10 times faster than the isotropic one: $d_\mathrm{ext} = 8\,\mathrm{mm}^2/\mathrm{year}$ and $d_\mathrm{axn} = 80\,\mathrm{mm}^2/\mathrm{year}$ \cite{schaferInterplayBiochemicalBiomechanical2019}. Concerning the forcing term we fix $f=0$ and we impose homogeneous Neumann boundary conditions. The solution is computed by means of the PolyDG method with $p=2$. Concerning the time integration we adopt a timestep $\Delta t = 0.01\,\mathrm{years}$.
\par
To simulate the $\alpha$-synuclein diffusion in Parkinson’s disease we generate an initial condition, with concentration initially located in the dorsal motor nucleus \cite{braakStagingBrainPathology2003}, reported in Figure \ref{fig:Solution3DPark}. The simulation gives rise to a propagating front of a misfolded protein concentration. The possibility of increasing the polynomial order is fundamental in this context in order to get a physically consistent solution, without an extremely refined mesh. From  a qualitative point of view, the diffusion directions follow the direction of the reconstructed fibres, as reported in Figure~\ref{fig:Solution3DPark}. Moreover, they are coherent with the medical literature \cite{braakStagingBrainPathology2003, Goedert2015}.
\par
Moreover, we compute the activation time as in Equation \eqref{eq:acttime} and we report it in Figure \ref{fig:Solution3DPark}. We can notice that the time of development of the pathology is of the order of 20 years, coherently with the literature \cite{braakStagingBrainPathology2003, Goedert2015, weickenmeierPhysicsbasedModelExplains2019}.
\par
Finally, we compute the average of the solution $\bar{c}_h(t)$ inside some regions of the brain, which are used in literature to distinguish the 6 Braak's stages \cite{braakStagingBrainPathology2003} in Parkinson's disease. We report the resulting curves over a time interval of 30 years in Figure~\ref{fig:Solutionregions}. We can observe from the region that the initial condition is located inside the dorsal motor nucleus with a mean concentration of $0.2$, then the activation of the regions over the years follows the medical predictions \cite{braakStagingBrainPathology2003}. If we consider as a problematic concentration of $\alpha-$synuclein a value $\bar{c}_h(t)=0.2$, in the mesocortex we reach this value around 13 years. This is an important step, because it can be considered as the beginning of the fourth Braak's stage and then of the symptomatic phase of the disease.
\begin{figure}[t]
	\centering
	{\includegraphics[width=\textwidth]{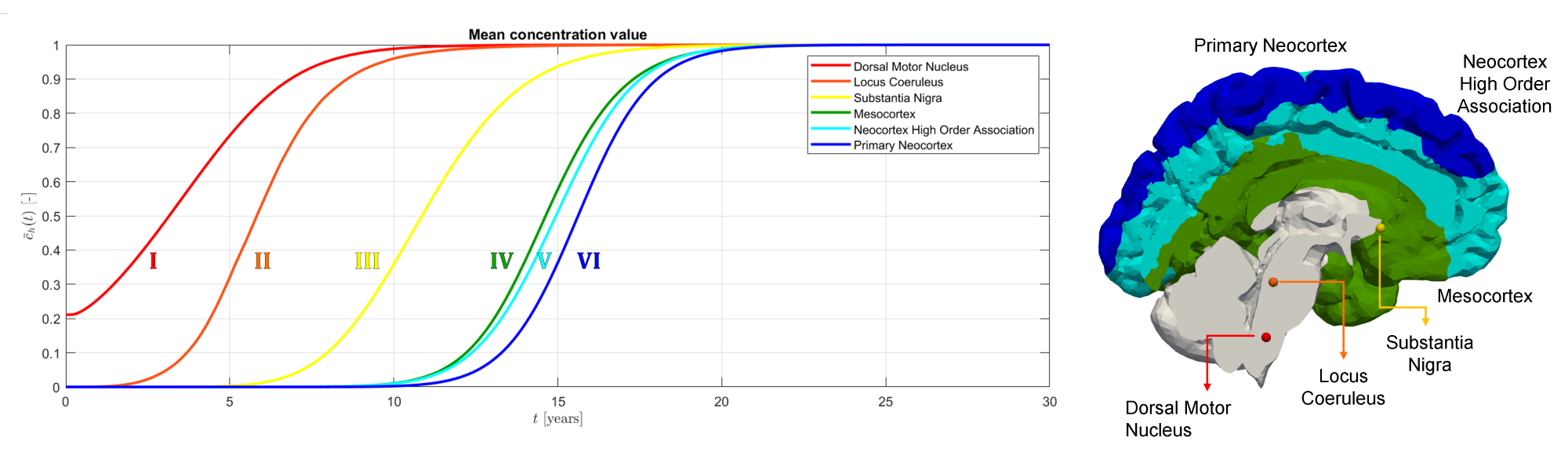}}
	\caption{Mean value of the concentration $\bar{c}_h$ inside some selected regions inside the brain plotted over 30 years (left) and position of brain regions (right).} 
	\label{fig:Solutionregions}
\end{figure}

\section{Conclusions}
\label{sec:conclusion}
In this work, we have proposed a polyhedral discontinuous Galerkin method (PolyDG) for the solution of Fisher-Kolmogorov model applied to the spreading of $\alpha$-synuclein protein in Parkinson's disease. We derived stability and convergence error estimates for arbitrary-order approximation of the semi-discrete formulation. 
\par
The numerical convergence tests were presented both in two and three dimensions. In particular, the convergence tests confirmed the theoretical results of our analysis on polygonal mesh for both implicit and semi-implicit treatments of the nonlinear term. Moreover, we performed a numerical simulation to evaluate the quality of the solution in the case of wavefront propagation in two dimensions. The numerical results confirm the importance of using a high-order method to solve this type of equation maintaining an acceptable computational cost and with a high level of accuracy.
\par
Finally, we present a simulation of $\alpha$-synuclein spreading first on a slice of a real brain in the sagittal plane with a polygonal agglomerated grid and on a 3D brain geometry. We validate the simulations by comparing the activations of some brain regions with the medical literature.
\par
Some future developments of this work can be the construction of DG positivity-preserving schemes on polytopal and polyhedral grids. Moreover, the method can be applied to simulate the spreading of other types of prionic proteins, such as A$\beta$-amyloid and $\tau$. In that context, PET imaging can be used to validate the results. Another interesting future development can be the construction of a space-time DG formulation \cite{antonietti_space-time_2020, antonietti_discontinuous_2023} to achieve higher-order approximations also in time. Finally, it could be interesting to use uncertainty quantification to evaluate the impact of reaction and diffusion parameters on the onset of the disease.

\section*{Acknowledgments}
The brain MRI images were provided by OASIS-3: Longitudinal Multimodal Neuroimaging: Principal Investigators: T. Benzinger, D. Marcus, J. Morris; NIH P30 AG066444, P50 AG00561, P30 NS09857781, P01 AG026276, P01 AG003991, R01 AG043434, UL1 TR000448, R01 EB009352. AV-45 doses were provided by Avid Radiopharmaceuticals, a wholly-owned subsidiary of Eli Lilly.

\section*{Declaration of competing interests}
The authors declare that they have no known competing financial interests or personal relationships that could have appeared to influence the work reported in this article.
\bibliographystyle{ieeetr}
\bibliography{sample.bib}
\end{document}